\documentclass[letterpaper, 10 pt, conference]{ieeeconf}  % Comment this line out if you need a4paper
\IEEEoverridecommandlockouts                              % This command is only needed if 
\usepackage{amsmath, amssymb}
\usepackage{bm}
\usepackage[ruled,vlined,linesnumbered]{algorithm2e}
\usepackage{graphicx}
\usepackage{subcaption}
\usepackage{float}
\usepackage{tikz}
\usetikzlibrary{automata, positioning, arrows}
\usepackage{hyperref}
\usepackage{multirow}
\usepackage{makecell}
\usepackage{xcolor}
\usepackage{etoolbox}
\let\labelindent\relax
\usepackage[shortlabels]{enumitem}
\usepackage{mathrsfs}
\usepackage{booktabs}

\newtheorem{proposition}{\bfseries Proposition}
\newtheorem{example}{\bfseries Example}

\newtheorem{theorem}{\bfseries Theorem}

\newtheorem{problem}{\bfseries Problem}

\newcommand{\R}{\mathbb{R}}
\newcommand{\IR}{\mathbb{IR}}
\newcommand{\N}{\mathbb{N}}
\newcommand{\cS}{\mathcal{S}}
\newcommand{\cX}{\mathcal{X}}
\newcommand{\cU}{\mathcal{U}}

\newcommand{\cC}{\mathbb{C}}
\newcommand{\cO}{\mathbb{O}}
\newcommand{\Int}{\mathrm{int}}
\newcommand{\bigtimes}{\mathop{\raisebox{-.5ex}{\scalebox{1.6}{$\times$}}}\displaylimits} %  Cartesian Product

\makeatletter
\patchcmd{\@makecaption}{\scshape}{}{}{}
\patchcmd{\@algocf@start}{-1.5em}{0pt}{}{}
\makeatother

\graphicspath{{./figures/}}
\allowdisplaybreaks[4]

\newif\ifdraft
\drafttrue
\usepackage{todonotes}
\title{\LARGE \bf Computing the Maximal Controlled Invariant Set for Neural Network Control Systems}
\author{Tianxiao Ye, Hang Zhang, and Xiangru Xu
\thanks{T. Ye, H. Zhang, and X. Xu are with the Department of Mechanical Engineering, University of Wisconsin-Madison, Madison, WI, USA. Email: 
{\tt\small \{tye46,hang.zhang,xiangru.xu\}@wisc.edu}.}}

\begin{document}

\maketitle

% \begin{abstract}
% This paper considers the computation of controlled invariant sets (CISs) for discrete-time neural network control systems (NNCSs). An optimization-free method is proposed that computes certified inner and outer approximations of the maximal CISs. An inclusion function whose bounds depend affinely on the control input is constructed for NNCS dynamics, and a hyperplane arrangement induces a finite partition of the control space such that feasibility is uniform on each face, reducing the infinite search over admissible controls to the finite evaluation of a single representative per face. These independent evaluations are naturally parallelizable on GPUs. The effectiveness of the proposed approach is demonstrated through two numerical examples.
% \end{abstract}
\begin{abstract}
This article studies the computation of the maximal controlled invariant set (MCIS) for  neural network control systems (NNCSs) with a nominal plant model and a neural network residual model. An interval-based method is developed to construct a control-affine inclusion function for the NNCS dynamics and to exploit the induced hyperplane arrangement in the input space, which  reduces the verification of controlled invariance from an infinite control search to a finite set of representative evaluations. 
Based on this finite characterization, verification-guided algorithms  iteratively construct inner and outer approximations of the the MCIS, which are naturally parallelizable. Numerical examples demonstrate the effectiveness of the proposed approach.
% This article studies the computation of the maximal controlled invariant set (MCIS) for  neural network control systems (NNCSs) with a known plant model and a neural network residual. An efficient and optimization-free method is proposed to verify the CIS condition. The method constructs a control-affine inclusion function of NNCS dynamics and exploits the induced hyperplane arrangement to partition the control space into finitely many subsets. Therefore, it reduces the CIS verifications from the infinite control search to a finite number of representative evaluations. 
% Based on this finite characterization, verification-guided algorithms are developed for iteratively constructing the MCIS inner and outer approximations that are naturally parallelizable. Numerical examples demonstrate the effectiveness of the approach.
\end{abstract}

\section{Introduction}
\label{sec:intro}
With the extensive adoption of neural networks (NNs), an increasing number of dynamical systems incorporate NN components to universally approximate complex nonlinear behaviors that are difficult or expensive to derive from first principles~\cite{salzmann2023real,schwan2023stability,fabiani2022reliably,fazlyab2020safety}. This trend leads to the rapid development of neural network control systems (NNCSs).
Despite their empirical success, providing formal guarantees for NNCSs remains a significant challenge due to the highly nonlinear and compositional structure of NNs.
% Safety is a primary concern in control systems, particularly in applications where constraint violations can result in catastrophic failures.

Controlled invariant sets (CISs) play a fundamental role in formally analyzing control systems because they characterize the states from which state and input constraints can be satisfied for all future time under admissible control inputs.
Hence, CISs serve as a key tool for designing safety filters and guaranteeing recursive feasibility in model predictive control~\cite{ames2016control,xu2015robustness,rawlings2020model,mayne2000constrained}. 
For linear systems, efficient methods to compute CISs are proposed based on the Riccati-type recursive matrix equation~\cite{bertsekas1972infinite}, polyhedral backward reachability iterations~\cite{blanchini1999set}, and Lagrangian methods~\cite{maidens2013lagrangian}. 
For nonlinear systems, the problem becomes more challenging due to the loss of convexity and a tractable analytical structure. Existing approaches typically rely on convex approximations, interval analysis, or reachable-set over/under-approximations, often at the cost of conservatism or additional assumptions~\cite{fiacchini2010computation}, \cite{monnet2016computing}, \cite{brown2023computing}. However, these methods are not directly applicable to NNCSs, and synthesizing CISs of NNCSs is still an open problem. A recent work~\cite{li2025control} addresses this for rectified linear unit (ReLU)-activated multilayer perceptrons by combining state-space quantization with mixed-integer linear programming (MILP). Nevertheless, the proposed approach is limited to stand-alone NNs representing system dynamics; moreover, its reliance on MILP hinders scalability to larger networks and restricts applicability to piecewise-linear activation functions.

This paper addresses the inner and outer approximations of the maximal controlled invariant set (MCIS) for NNCSs whose dynamics consist of a nominal plant model and an NN residual model.
Our main contributions are twofold: 
(i) We develop an efficient, optimization-free method for the verification of a relaxed controlled invariance condition, based on the function inclusion and hyperplane arrangement techniques. Specifically, we construct a control-affine inclusion function of the NNCS dynamics and show that the induced hyperplane arrangement for this inclusion function partitions the admissible control space into finite subsets, within each of which the verification result is invariant. This property converts the infinite existential search over admissible controls to the evaluation of a single representative point per subset.  
% (ii) We show that for each state computed in the CIS, the same finite partition yields a box-wise certified feasible control set represented as a finite union of convex polytopes, thereby providing a certified under-approximation of the regulation map.
(ii) We develop verification-guided synthesis algorithms to compute the inner and outer approximation of the MCIS represented as the union of intervals. The proposed algorithms are inherently parallelizable, making them well-suited for high-performance computing environments, such as GPU-based implementations.
% Our main contributions are threefold: (i) we develop a certified method for computing a three-class paving of the safe set $\cX$ into certified three subpavings: an inside subpaving $\cC$ certified to be controlled invariant, an outside subpaving $\cO$ certified to lie outside the maximal CIS, and an unclassified subpaving~$\mathbb{U}$. (ii) we introduce a boxwise invariance operator and realize it without optimization by constructing a control-affine inclusion function, showing that the induced hyperplane arrangement gives rise to a finite partition of the admissible control space over which the feasibility indicator remains constant, and thereby reducing the infinite existential search over admissible controls to the evaluation of one representative point per partition. (iii) we show that for each certified box, the same finite partition yields a boxwise certified feasible control set represented as a finite union of convex polytopes, thereby providing a certified under-approximation of the regulation map.

The remainder of this article is organized as follows. Section~\ref{sec:pre} introduces preliminaries and the problem statement; Section~\ref{sec:suffCIS} introduces a relaxed CIS condition via function inclusion; Section~\ref{ssec:affine} develops a control-affine enclosure of the NNCS dynamics; Section~\ref{ssec:critical_ctrl} shows how the relaxed CIS test can be reduced to a finite search over representative controls via hyperplane arrangements;  Section~\ref{ssec:complete} presents the verification-guided algorithms for constructing the inner and outer approximations of the MCIS;  Section~\ref{sec:experiments} demonstrates the effectiveness of the proposed approach through numerical examples; finally, Section~\ref{sec:concl} concludes the article.

\section{Preliminaries \& Problem Statement}
\label{sec:pre}

\subsection{Preliminaries}
% $\R^n$ the $n$-dimensional Euclidean space, 
The set of real numbers and natural numbers are denoted by $\R$ and $\N$, respectively. For a given set $A \subseteq \R^n$, $\Int(A)$, $\partial A$,
$\overline{A}$, and $2^A$ denote its interior, boundary, closure,
and power set, respectively. For a given set $A$ that is finite, $|A|$ denotes its
cardinality. The symbol $B(x,r)$ denotes the open $\ell_\infty$-ball of radius $r$ centered at $x$ in $\R^n$, i.e., 
$B(x,r)=\left\{z \in \mathbb{R}^{n} \;|\;\|z-  x\|_{\infty} < r\right\}$, where $\|\cdot\|_\infty$ is the infinity norm.

\subsubsection{Interval Analysis}\label{ssec:interval}

We introduce notations and definitions from interval analysis that will be used in the sequel~\cite{moore2009introduction,jaulin2001interval}. 
The set of all intervals in $\R$ is denoted by $\IR$. A closed interval in $\IR$ is denoted by $[\bm x] = [\underline{x}, \overline{x}]$ and an open interval in $\IR$ is denoted by $(\bm x) = (\underline{x}, \overline{x})$, where $\underline{x}$ and $\overline{x}$ are the lower and upper bounds, respectively, satisfying $\underline{x}\leq \overline{x}$ elementwise.
% A box is defined as the Cartesian product of intervals:  $[x] = [x_1] \times \cdots \times [x_n] \in \IR^n$.
Given $n$ intervals $[\bm x_1],\dots,[\bm x_n]\in\IR$, their Cartesian product is
denoted by $\bigtimes_{i=1}^{n}[\bm x_i] \triangleq [\bm x_1]\times\cdots\times[\bm x_n]$.
A box $[{\bm x}]\in\IR^n$ is defined as such a Cartesian product, i.e., 
$[\bm x] = \bigtimes_{i=1}^{n}[\bm x_i]$.
The width and midpoint of a box are defined as
$w([\bm x]) \triangleq \|\overline{\bm x} - \underline{\bm x}\|_\infty, \mathrm{mid}([\bm x]) \triangleq \tfrac{1}{2}(\underline{\bm x} + \overline{\bm x})$, respectively. 
For $y\in\mathbb{R}^n$, we denote by $\{y\}$ the degenerate box $[y, y]\in\mathbb{IR}^n$ and a box is called non-degenerate if it has nonempty interior.
% For a set $\mathcal{S}\subset\mathbb{R}^n$, $\Box\mathcal{S}$ denotes its interval hull, i.e., the smallest box that contains  $\mathcal{S}$.

%\begin{definition} \cite{jaulin2001interval}
Given a function $f: \R^n \to \R^m$, an interval function $[f]: \mathbb{IR}^n \to \mathbb{IR}^m$ is called an \emph{inclusion function}  for $f$ if $f([\bm x]) \subseteq [f]([\bm x])$ for any $[\bm x]\in\IR^n$ \cite[Section 2.4]{jaulin2001interval}. 
%The inclusion function $[f]$ is said to be \emph{convergent} if, for any sequence of boxes $[x](k)$, $\lim_{k\rightarrow\infty}w([x](k))=0\implies \lim_{k\rightarrow\infty}w([f]([x](k))) = 0$.
%\end{definition}

% Given a map $g: \R^n \to \R^m$, its exact image over a box $[x]\in\mathbb{IR}^n$ is $g([x]) := \{g(x) : x \in [x]\}$. 
%  the interval function $[g]: \mathbb{IR}^n \to \mathbb{IR}^m$ is an inclusion function  for $g$ if $g([x]) \subseteq [g]([x])$ for any $[x]$. The inclusion function is said to be \emph{convergent} if $w([g]([x])) \to 0$ as $w([x]) \to 0$.

%[subpaving and paving
%\begin{definition}\cite{chabert2009contractor}\label{def:subpaving}
% A \emph{subpaving} of a box $[x]$ is a finite set $\cC \subset \IR^n$ of boxes $[{b}] \subseteq [x]$ with pairwise disjoint interiors and we associate to $\cC$ the point-set union $\cC \triangleq \bigcup_{[{b}] \in \cC} [{b}].$
A \emph{subpaving} of a box 
$[\bm x]\in\IR^n$ is a finite set of boxes $\{[\bm b_1], \ldots, [\bm b_N]\} \subseteq \IR^n$
such that $[{\bm b}_i] \subseteq [\bm x]$ for all $i$ and
$\Int([{\bm b}_i]) \cap \Int([{\bm b}_j]) = \emptyset$
for any $i \neq j$. Besides viewing a subpaving as the set of boxes $\{[\bm b_1], \ldots, [\bm b_N]\}$, it can  also be viewed as the union $[\bm b_1]\cup\dots\cup [\bm b_N]$, depending on the context. Hence, a subpaving can be interpreted either as a discrete subset of $\IR^n$ or as a compact subset of $\R^n$  \cite[Section 1.5]{chabert2009contractor}. A \emph{paving} of a box $[\bm x]\in\IR^n$ is a collection of subpavings
$\mathbb{K}_1, \ldots, \mathbb{K}_N$ such that $[\bm x] =  \bigcup_{i=1}^{N} \bigcup_{[{\bm b}] \in \mathbb{K}_{i}} [{\bm b}]$.

\subsubsection{Controlled Invariant Set}
Consider a discrete-time control system $x^+ = f(x,{u})$ where the state $x\in\cX\subseteq \R^n$ and input $u\in\cU\subseteq \R^m$. A set $\cS \subseteq \cX$ is called a \emph{controlled invariant set} (CIS) if, for every $x \in \cS$, there exists $u \in \cU$ such that $f(x, u) \in \cS$  \cite{blanchini1999set,aubin2011viability}. The \emph{maximal controlled invariant set} (MCIS) is the largest CIS, defined as 
% \begingroup
% \setlength{\abovedisplayskip}{3pt}  % default is ~12pt
% \setlength{\belowdisplayskip}{3pt}
$$
\cS^* = \bigcup \{\cS \subseteq \cX : \cS \text{ is a CIS}\}.
$$
%\endgroup
%with state constraint set $\cX$ and control constraint set $\cU$.
The \emph{regulation map} associated with a CIS  $\cS\subseteq\cX$ is the set-valued map $R_{\cS}:\cX\to 2^{\cU}$ defined by $  R_{\cS}(x) := \{u\in\cU : f(x,u)\in\cS\}.$
%[Controlled Invariant Set]
% \begin{definition}\label{def:CIS}
% A set $\cS \subseteq \cX$ is called a \emph{controlled invariant set} (CIS) of the discrete-time $x^+ = f(x,u)$ if, for every $x \in \cS$, there exists $u \in \cU$ such that $f(x, u) \in \cS$. The \emph{maximal controlled invariant set} (MCIS) is the largest CIS containing all CISs, defined as
% $\cS^* = \bigcup \{\cS \subseteq \cX : \cS \text{ is a CIS}\}$. 
% \end{definition}
%The maximal CIS coincides with the viability kernel of $\cX$ .

The \emph{one-step predecessor} of a set $\cS\subseteq\cX$ is defined as
% \begingroup
% \setlength{\abovedisplayskip}{3pt}  % default is ~12pt
% \setlength{\belowdisplayskip}{3pt}
\begin{equation*}%\label{eq:Pre}
  \mathrm{Pre}(\cS)
  :=
  \{x\in\cX : \exists u\in\cU,\; f(x,u)\in\cS\}.
\end{equation*}
%\endgroup
Clearly, a set $\cS\subseteq\cX$ is a CIS if and only if
$$
\cS \subseteq \mathrm{Pre}(\cS).
$$ 
%[Regulation map]
%\begin{definition}\label{def:regulation}

% \begin{equation*}%\label{eq:regulation}
%   R_{\cS}(x) := \{u\in\cU : f(x,u)\in\cS\}.
% \end{equation*}
%\end{definition}

% The CIS and the regulation map are closely related to concepts in viability theory; see~\cite{aubin2011viability} for more details.

\subsubsection{Hyperplane Arrangements}
% \subsection{Controlled Invariance}
% \label{ssec:ci}
We now introduce several definitions from the theory of hyperplane arrangements, following \cite{stanley2007introduction,orlik2013arrangements},  that will be used in Section~\ref{ssec:critical_ctrl}. 
An \emph{affine hyperplane} in $\R^n$ is defined as $\{v\in\R^n\;|\; \alpha^\top v=\beta\}$ where $\alpha\in\R^n$ is a nonzero vector and $\beta\in \R$.
%, and $\alpha \cdot v$ is the dot product. 
A (finite) \emph{hyperplane arrangement} in $\R^n$ is a finite set of affine hyperplanes in $\R^n$, denoted as $\mathcal{A}\triangleq\{\mathcal{H}_1,\mathcal{H}_2,\cdots,\mathcal{H}_N\}$, where $\mathcal{H}_i=\{v\in\R^n\;|\; \alpha_i^\top v=\beta_i\}$, $i=1,2,\dots,N$.  
%$\mathcal{A}\triangleq\cup_{i=1}^{N} \{\{v\in\R^n\;|\; \alpha_i \cdot v=\beta_i\}\}$. 
%Now let $\mathbb{R}^d$ to be a given finite-dimensional vector space.
%\begin{definition}\label{def:sign_pattern}
The \emph{sign pattern} of a vector $x\in\R^n$ with respect to $\mathcal{A}$ is defined as $s(x)\triangleq\left(\operatorname{sgn}(\alpha_1^\top x-\beta_1),\dots,\operatorname{sgn}(\alpha_N^\top x-\beta_N)\right)\in \{-1,0,+1\}^N$. The (open) \emph{face} of $\mathcal{A}$ is a maximal connected set of points sharing the same sign pattern. The collection of all faces forms a partition of $\R^n$, i.e., every point in $\R^n$ belongs to exactly one face. Faces may have any dimension from $0$ to $n$. The full-dimensional faces are open convex polyhedra and are called the \emph{regions} of the arrangement.
% \begin{align*}
%   s(x) := \bigl(\operatorname{sgn}(\ell_{\mathcal{H}}(x))
%   \bigr)_{\mathcal{H}\in\mathcal{A}} \in \{-1,0,+1\}^{|\mathcal{A}|}.
% \end{align*}
% where $\operatorname{sgn}(t)$ equals $-1$, $0$, or $+1$ according to whether $t<0$, $t=0$, or $t>0$.
%\end{definition}

%\begin{definition}\cite[Section~0]{zaslavsky1975facing}\label{def:face_region}
% A \emph{face} of a finite hyperplane arrangement $\mathcal{A}$ in $\R^d$ is a maximal connected set of points sharing the same sign pattern. The collection of all faces forms a partition of $\R^d$, i.e., every point in $\R^d$ belongs to exactly one face. Faces may have any dimension from $0$ to $d$. The full-dimensional faces are open convex polyhedra~\cite[Section~1.1]{stanley2007introduction} and are called the \emph{regions} of the arrangement.
%\end{definition}

%The maximal CIS coincides with the viability kernel of $\cX$, while the regulation map defined above generalizes the classical regulation map of the viability kernel  ~\cite{aubin2011viability}. 

%This definition generalizes the classical regulation map of the viability kernel~\cite[Definition~9]{aubin2011viability}, which corresponds to the special case $\cS=\cS^*$.

\subsection{Problem Statement}
\label{ssec:problem}

%It is assumed that we are given a time-invariant discrete-time dynamic system
In this work, we consider the following discrete-time control system:
\begingroup
\setlength{\abovedisplayskip}{3pt}  % default is ~12pt
\setlength{\belowdisplayskip}{3pt}
\begin{equation}\label{eq:system}
  x_{k+1} = \underbrace{f_0(x_k, u_k)+f_{\mathrm{NN}}(x_k, u_k)}_{f(x_k,u_k)} %\quad k = 0, 1, 2, \ldots
\end{equation}
\endgroup
where $x_k \in \cX\subseteq\R^n$ is the state, $u_k \in \cU\subseteq \R^m$ is the control input, $f_0:\R^n\times\R^m\to\R^n$ represents the nominal dynamics, and $f_{\mathrm{NN}}:\R^n\times\R^m\to\R^n$ is a function parameterized as a feed-forward neural network (FFNN) that captures the residual dynamics. For clarity of presentation, we assume that both $\cU$ and $\cX$ are boxes; however, the proposed method extends naturally to the case where $\cX$ and $\cU$ are finite unions of boxes. Assume that $f_0$ is  twice continuously differentiable on an open set containing $\cX \times \cU$, with bounded second-order partial derivatives. Assume that $f_{\mathrm{NN}}$ has $\ell-1$ hidden layers; let $W^{(i)}\in\R^{n_i\times n_{i-1}}$ and $b^{(i)}\in\R^{n_i}$ denote the weight matrix and bias vector of the $i$-th layer, respectively, where $n_i$ is the number of neurons in the $i$-th layer, with $n_0 = n+m$ and $n_{\ell}=n$;  denote by $z^{(i)}$ the output of the neurons in the $i$-th layer. Then, we have  
%Writing $f := f_0 + f_{\mathrm{NN}}$, we shall refer to $f_0$ as the \emph{nominal dynamics} and to $f_{\mathrm{NN}}$ as the \emph{residual dynamics}.
% \XX{can we remove the superscript f in these n?}\TY{Yes. Done.}\XX{keep simplifying the notations}
% \begin{align}
%   z^{(0)} &= \begin{bmatrix} x_k \\ u_k \end{bmatrix} \in \R^{n_0}, \label{eq:f_input} \\
%   \hat{z}^{(i)} &= W^{(i)} z^{(i-1)} + b^{(i)},\, i=1,\ldots,\ell_f-1, \label{eq:f_linear} \\
%   z^{(i)} &= \sigma\!\bigl(\hat{z}^{(i)}\bigr), \, i=1,\ldots,\ell_f-1, \label{eq:f_act} \\
%   f_{\mathrm{NN}}(x_k,u_k) &= W^{(\ell_f)} z^{(\ell_f-1)} + b^{(\ell_f)} \in \R^n, \label{eq:f_output}
% \end{align}
\begin{align*}
z^{(0)} &= \begin{bmatrix} x_k \\ u_k \end{bmatrix} \in \R^{n+m}, \\
z^{(i)} &= \sigma\!\bigl(W^{(i)} z^{(i-1)} + b^{(i)}\bigr)\in \R^{n_i}, \, i=1,\ldots,\ell-1, \label{eq:f_act} \\
f_{\mathrm{NN}}(x_k,u_k) &= W^{(\ell)} z^{(\ell-1)} + b^{(\ell)} \in \R^n,
\end{align*}
where $\sigma(\cdot)$ denotes the activation function (such as ReLU and $\tanh$) applied element-wise.

We will address the problem stated as follows.
\begin{problem}\label{prob:main}
Given the NNCS \eqref{eq:system}, with $x_k \in \cX\subseteq\R^n$ and $u_k \in \cU\subseteq \R^m$, where both $\cU$ and $\cX$ are boxes, compute two subpavings $\underline{\cS}^*$ and $\overline{\cS}^*$ as the inner and outer approximations of the MCIS, respectively, such that 
$$
\underline{\cS}^*\subseteq\cS^*\subseteq\overline{\cS}^*
$$ 
with $\underline{\cS}^*$ being a CIS, while making the approximations as tight as possible.
\end{problem}

% \begin{problem}
% \label{prob:main}
% Given $f_0$, $f_{\mathrm{NN}}$, $\mathcal{X}$, and $\mathcal{U}$, compute:
% \begin{enumerate}[label=(\roman*), nosep]
    % \item a paving of $\cX$ into three subpavings $\cC$, $\cO$, and $\mathbb{U}$ such that
    % \[
    % \begin{aligned}
    % \cC \cup \cO \cup \psu{\mathbb{U}} &= \cX,\\
    % \cC &\subseteq \cS^*,\\
    % \cO \cap \cS^* &= \emptyset,\\
    % \Int(\cC) \cap \Int(\cO) &= \emptyset,\\
    % \Int(\cC) \cap \Int(\psu{\mathbb{U}}) &= \emptyset,\\
    % \Int(\cO) \cap \Int(\psu{\mathbb{U}}) &= \emptyset,
    % \end{aligned}
    % \]
    % where $\cC$ is a certified CIS of system~\eqref{eq:system}.
    
To solve this problem, we construct a paving of $\cX$ of the form  $\cX=\mathbb{C} \cup \cO \cup \mathbb{U}$ such that $\mathbb{C} \subseteq \cS^*$ and  
$\cO \subseteq \mathcal{X}\setminus \cS^*$. Here, $\mathbb{C}$ is a CIS, $\cO$ contains states that lie outside the MCIS, and $\mathbb{U}$ consists of states that remain undetermined. Therefore, Problem \ref{prob:main} can be addressed by setting $\underline{\cS}^*=\mathbb{C}$ and $\overline{\cS}^*=\mathbb{C}\cup \mathbb{U}$ while making $\mathbb{U}$ as small as possible.  The detailed algorithms for constructing the subpavings $\mathbb{C}$, $\cO$, and  $\mathbb{U}$ are presented in Section \ref{ssec:complete}, following the introduction of a CIS sufficient condition via function inclusion in Section \ref{sec:suffCIS}, the control-affine enclosure of the NNCS dynamics in Section \ref{ssec:affine} and the finite search for a feasible control via hyperplane arrangements in Section \ref{ssec:critical_ctrl}.

\section{Relaxed CIS Condition via Function Inclusion}\label{sec:suffCIS}

Recall that a set $\cS$ is a CIS if and only if $\cS \subseteq \mathrm{Pre}(\cS)$. For a subpaving $\mathbb{S}$, we relax this condition by using an inclusion function $[f]$ for $f$ and verifying each box $[\bm b] \in \mathbb{S}$ individually: for each $[\bm b]$, find $u \in \cU$ such that 
$$
[f]([\bm b],u) \subseteq \mathbb{S}
$$
where $[f]([\bm b], u) \supseteq \{f(x, u) : x \in [\bm b]\}$ is the inclusion function evaluated on the box $[\bm b]$ with control $u$. The following proposition establishes the soundness and completeness of this relaxed condition.
%We now present a proposition that reveals the conservatism of this sufficient condition.

%However, computing $\mathrm{Pre}(\cS)$ requires searching over an infinite input set $\cU$ to find a suitable control $u$ for every $x \in \cS$, which is computationally prohibitive, especially for the NNCS \eqref{eq:system}. To address Problem \ref{prob:main}, we relax the CIS condition to a box-based sufficient condition by using inclusion functions. 
%: for each $[\bm b] \in \mathbb{S}$, find $u \in \cU$ such that $[f]([\bm b],u) \subseteq \mathbb{S}$.

% Furthermore, recall from Section~\ref{ssec:affine} that the CIS condition $\mathbb{S} \subseteq \mathrm{Pre}(\mathbb{S})$ requires the existence of $u \in \cU$ with $f(x,u) \in \mathbb{S}$ for every $x \in \mathbb{S}$. 
% We relax this to a box-level sufficient condition: for each $[\bm b] \in \mathbb{S}$, find $u \in \cU$ such that $[f]([\bm b],u) \subseteq \mathbb{S}$.
% We now present a proposition that reveals the relationship between this sufficient condition and the exact condition.

\begin{proposition}\label{prop:sound_complete}
Let $\mathbb{S}$ be a subpaving of $\cX$ and $[f]$ an inclusion function for $f$. Then the following two statements hold.
\begin{enumerate}[label=(\roman*), nosep]
  \item\label{item:soundness}
  \textbf{(Soundness.)}
  If there exists $u^* \in \cU$ such that $[f]([\bm b], u^*) \subseteq \mathbb{S}$, then $[\bm b] \subseteq \mathrm{Pre}(\mathbb{S})$.

  \item\label{item:completeness}
  \textbf{(Completeness.)}
  Suppose that $[f]$ is convergent, i.e., for any sequence of boxes $[\bm x](k)$, $\lim_{k\rightarrow\infty}w([\bm x](k))=0$ $\implies$ $\lim_{k\rightarrow\infty}w([f]([\bm x](k))) = 0$. 
  If $x \in \mathrm{Pre}(\mathbb{S})$ and there exists $u^* \in \cU$ with 
  $f(x, u^*) \in \Int(\mathbb{S})$, then there exists $\delta > 0$ such that, for all $[\bm b]$ containing $x$ with $w([\bm b]) < \delta$,  
  $[f]([\bm b], u^*) \subseteq \mathbb{S}$.
\end{enumerate}
\end{proposition}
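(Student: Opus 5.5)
For \ref{item:soundness}, the plan is to unwind the definitions. Fix any $x\in[\bm b]$. Because $[f]$ is an inclusion function for $f$, evaluating it on the box $[\bm b]\times\{u^*\}$ gives
$$f(x,u^*)\in\{f(z,u^*):z\in[\bm b]\}\subseteq[f]([\bm b],u^*)\subseteq\mathbb{S}.$$
Here we regard $[f]([\bm b],u^*)$ as the inclusion function evaluated on the degenerate box $\{u^*\}$ in the input coordinates. Since $\mathbb{S}\subseteq\cX$ and $x\in[\bm b]\subseteq\cX$ (the box belongs to the subpaving of $\cX$ under consideration), $u^*$ witnesses $x\in\mathrm{Pre}(\mathbb{S})$. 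As $x$ was arbitrary, $[\bm b]\subseteq\mathrm{Pre}(\mathbb{S})$.

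For \ref{item:completeness}, set $y=f(x,u^*)\in\Int(\mathbb{S})$. We choose $r>0$ with $B(y,r)\subseteq\mathbb{S}$, which is possible since the $\ell_\infty$-balls form a neighborhood base. The aim is to show that $[f]([\bm b],u^*)$ is a box that contains $y$ and has width less than $r$. Any such box lies in $\{z:\|z-y\|_\infty<r\}=B(y,r)\subseteq\mathbb{S}$. Containment of $y$ is automatic whenever $x\in[\bm b]$, by the inclusion property: $y=f(x,u^*)\in[f]([\bm b],u^*)$.

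The remaining step is the width bound, and this is where the convergence hypothesis is used. We argue by contradiction. Suppose no $\delta$ works. Then for each $k$ there is a box $[\bm b](k)\ni x$ with $w([\bm b](k))<1/k$ and $w([f]([\bm b](k),u^*))\ge r$. The product boxes $[\bm b](k)\times\{u^*\}$ have widths tending to $0$, so convergence of $[f]$ forces $w([f]([\bm b](k),u^*))\to0$, a contradiction. Hence some $\delta>0$ gives width $<r$ for every box containing $x$ with $w([\bm b])<\delta$, and the previous paragraph finishes the proof.

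The only delicate point is interpreting $[f]([\bm b],u^*)$ as $[f]$ applied to the box $[\bm b]\times\{u^*\}$, so that both the inclusion property and convergence apply with the degenerate input interval. I will state this convention at the start of the proof. I will also note that $\mathbb{S}$ is used as a point set and that $x\in\mathrm{Pre}(\mathbb{S})$ is not actually needed beyond the existence of $u^*$.
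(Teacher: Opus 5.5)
Your proof is correct and follows the paper's argument essentially step for step. Part (i) is the inclusion property applied at $u^*$. Part (ii) picks $r$ with $B(f(x,u^*),r)\subseteq\mathbb{S}$, uses $w([\bm b]\times\{u^*\})=w([\bm b])$ and convergence to obtain a $\delta$ forcing $w([f]([\bm b],u^*))<r$, and concludes via the inclusion property. The only difference is that you spell out, by contradiction with a sequence of shrinking boxes, how the sequential convergence hypothesis yields a uniform $\delta$, whereas the paper simply asserts this step.
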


\begin{proof}
\ref{item:soundness}.
Since $[f]$ is an inclusion function, it holds that $f([\bm b], u^*) \subseteq [f]([\bm b], u^*)$, and therefore, $f([\bm b], u^*) \subseteq \mathbb{S}$.
In particular, every $x \in [\bm b]$ satisfies $f(x, u^*) \in \mathbb{S}$, which means that $u^*$ witnesses the containment $[\bm b] \subseteq \mathrm{Pre}(\mathbb{S})$.

\ref{item:completeness}.
Since $f(x, u^*) \in \Int(\mathbb{S})$, there exists $r > 0$ such that $B(f(x, u^*), r) \subseteq \mathbb{S}$.
Because $[f]$ is convergent and $\{u^*\} = [u^*, u^*]$ is a degenerate box, it holds that $w([\bm b] \times \{u^*\}) = w([\bm b])$.
Since $[f]$ is convergent, there exists $\delta > 0$ such that $w([\bm b]) < \delta \Longrightarrow w\bigl([f]([\bm b], u^*)\bigr) < r.$
Now for any $x \in [\bm b]$ with $w([\bm b]) < \delta$, 
by the inclusion property, $f(x, u^*) \in [f]([\bm b], u^*)$.
For every $y \in [f]([\bm b], u^*)$, since both $y$ and $f(x, u^*)$ lie in the box $[f]([\bm b], u^*)$, we have $\|y - f(x, u^*)\|_\infty \leq w\bigl([f]([\bm b], u^*)\bigr) < r$.
Therefore, $[f]([\bm b], u^*) \subseteq B(f(x, u^*), r) \subseteq \mathbb{S}$.%, where $u^*$ witnesses the existential test in the invariance condition.
\end{proof}

% Soundness shows that the sufficient condition implies the original CIS condition, while completeness shows that, under mild conditions on $f$ and $\mathbb{S}$, the gap between them can be sufficiently small if the box width is sufficiently small.
% Proposition \ref{prop:sound_complete} shows that the sufficient condition implies the original CIS condition, and moreover, under mild conditions on $f$ and $\mathbb{S}$, the gap between them can be sufficiently small if the box width is sufficiently small.

Proposition \ref{prop:sound_complete} shows that, under mild conditions on $f$ and $\mathbb{S}$, the gap between the relaxed and the original inclusion conditions for CIS can be made arbitrarily small by refining the box widths. However, the relaxed condition still requires searching over an infinite input set $\cU$ to find a suitable control input, which is computationally prohibitive, especially for the NNCS \eqref{eq:system}. To address this issue, the next two sections develop methods that reduce this infinite search to a finite search over representative control inputs, by using a control-affine enclosure of NNCS dynamics and hyperplane arrangements in the input space.

%if the box width is sufficiently small.
% \XX{I removed the part for $f_0$ to appendix. Revise this section accordingly.}

% To realize the box-level invariance operator efficiently, we require an inclusion function for the dynamics in~\eqref{eq:system} that is computationally tractable. 
% % \HZ{What is an invariance operator? May need more explanation before the formal definition in Sec IV.}
% % Its quality directly affects the pruning power of the invariance test and, consequently, the overall runtime of the algorithm.
% Our goal in this subsection is to construct an inclusion function that preserves the explicit dependence on the control variable $u$.

% More precisely, for a fixed state box $[x]$, we seek a box-valued mapping of $u$ that encloses the image $f([x],u)$ and whose faces depend affinely on $u$.
% We refer to such a mapping as a \emph{control-affine enclosure}.
% This structure will later allow us to reduce the existential search over admissible controls to a finite feasibility check over representative control values.

%with $[x]\subseteq\mathcal{X}$ and $[u]\subseteq\mathcal{U}$

\section{Control-affine Enclosure of NNCS Dynamics}\label{ssec:affine}

% \XX{Note: I use bold symbol to represent intervals, as I find it weird to express $\forall x\in[x]$. Please change other parts accordingly, i.e., use  bold symbols to represent intervals.}
In this section, we derive affine functions of $u$ that bound $f(x,u)$ from above and below over arbitrarily given boxes in $\mathcal{X}\times \mathcal{U}$. 
Specifically, 
%given any box $[\bm x]\in\IR^n$ and any box $[\bm u]\in\IR^m$, 
given any $[\bm x]\subseteq\mathcal{X}$  and $[\bm u]\subseteq\mathcal{U}$,  we aim to determine $\underline{a}_i\in\R^m$, $\overline{a}_i\in\R^m$, $\underline{c}_i\in\R$, and $\overline{c}_i\in\R$, where $i=1,2,\dots,n$, such that 
\begingroup
\setlength{\abovedisplayskip}{3pt}  % default is ~12pt
\setlength{\belowdisplayskip}{3pt}
\begin{equation}\label{closuref} 
f(x,u) \in [f]_{[\bm x]}(u),\quad \forall x\in[\bm x],\;\forall u\in[\bm u],  
\end{equation}
\endgroup
where 
\begingroup
\setlength{\abovedisplayskip}{3pt}  % default is ~12pt
\setlength{\belowdisplayskip}{3pt}
\begin{equation}\label{eq:combined_img}
[f]_{[\bm x]}(u)
\triangleq
\bigtimes_{i=1}^{n}
\bigl[\underline{{a}}_i^{\top}u+\underline{c}_i,\;\; \overline{{a}}_i^{\top}u+\overline{c}_i\bigr].
\end{equation}
\endgroup
We refer to $[f]_{[\bm x]}(u)$ in \eqref{eq:combined_img} the \emph{control-affine enclosure} of $f$ over $[\bm x]$.
%,  $\underline{{a}}_j$ and $\overline{{a}}_j$ the lower and upper slope vectors, receptively,  and  $\underline{c}_j,\overline{c}_j$ the corresponding intercepts.
Since  $f=f_0+f_{\mathrm{NN}}$,  we construct separate control-affine enclosures for $f_0$ and $f_{\mathrm{NN}}$, and then combine them through summation.

\subsection{Control-affine Enclosure of \texorpdfstring{$f_0$}{f0}} 
Suppose that the nominal dynamics $f_0$ is written as $f_0=(f_{0,1},f_{0,2},\dots,f_{0,n})^\top$. We can apply the standard second-order Taylor expansion with an interval-bounded remainder to construct a control-affine enclosure for $f_{0,i}$ and, consequently, for $f_{0}$~\cite{moore2009introduction}. 

Let $x_c=\mathrm{mid}([\bm x])$ and  $u_c=\mathrm{mid}([\bm u])$ denote the  respective midpoints of $[\bm x]$ and $[\bm u]$. For any $i\in\{1,2,\dots,n\}$, we can expand $f_{0,i}$ around $(x_c,u_c)$ as 
$f_{0,i}(x,u)=f_{0,i}(x_c,u_c)+\nabla_{u} f_{0,i}(x_c,u_c)^\top (u-u_c)+\nabla_{x} f_{0,i}(x_c,u_c)^\top (x-x_c)+ r_i$. Here, $r_i$ is the Lagrange remainder term satisfying $|r_i|\leq\Delta_i\triangleq\tfrac{1}{2}{\rho}^\top\overline{H}_i {\rho}$ where $\rho = \tfrac{1}{2}(\overline{x}_1 - \underline{x}_1, \ldots, \overline{x}_n - \underline{x}_n, \overline{u}_1 - \underline{u}_1, \ldots, \overline{u}_m - \underline{u}_m)^\top \in \R_{\geq 0}^{n+m}$ is the vector of componentwise half-widths of $[\bm x] \times [\bm u]$, and $\overline{H}_i \in \R_{\geq 0}^{(n+m)\times(n+m)}$ is an elementwise upper bound on the absolute value of 
the Hessian of $f_{0,i}$ over $[\bm x] \times [\bm u]$. This bound is computed by evaluating each entry of $H_i$ over $[x] \times [u]$ via natural interval extension~\cite[Section~5.3]{moore2009introduction}. 
Furthermore, the linear function $\nabla_{x} f_{0,i}(x_c,u_c)^\top x$ can be lower and upper bounded as  $\underline{\theta}_i
\leq\nabla_{x} f_{0,i}(x_c,u_c)^\top x\leq \overline{\theta}_i$, where $\underline{\theta}_i=\min_{x\in[\bm x]}\nabla_{x} f_{0,i}(x_c,u_c)^\top x$ and  $\overline{\theta}_i=\max_{x\in[\bm x]}\nabla_{x} f_{0,i}(x_c,u_c)^\top x$. 

Setting ${\delta}_{i} =\nabla_{u} f_{0,i}(x_c,u_c)$,  $\underline{{\eta}}_{i}=f_{0,i}(x_c,u_c)-\nabla_{u} f_{0,i}(x_c,u_c)^\top u_c-\nabla_{x} f_{0,i}(x_c,u_c)^\top x_c+\underline{\theta}_i-\Delta_i$, and $\overline{{\eta}}_{i}=f_{0,i}(x_c,u_c)-\nabla_{u} f_{0,i}(x_c,u_c)^\top u_c-\nabla_{x} f_{0,i}(x_c,u_c)^\top x_c+\overline{\theta}_i+\Delta_i$, it follows that  
\begin{align}\label{closef0}
{\delta}_{i}^\top u+\underline{{\eta}}_{i} \le f_{0,i}(x,u)\le {\delta}_{i}^\top u+\overline{{\eta}}_{i} 
\end{align}
holds for any $x\in[\bm x], u\in[\bm u]$.

%See Appendix~\ref{sssec:taylor} for details.

\subsection{Control-affine Enclosure of \texorpdfstring{$f_{\mathrm{NN}}$}{fNN}}
Suppose that the FFNN residual dynamics $f_{\mathrm{NN}}$  is written as $f_{\mathrm{NN}}=(f_{{\mathrm{NN}},1},f_{{\mathrm{NN}},2},\dots,f_{{\mathrm{NN}},n})^\top$. We derive  a control-affine enclosure for each $f_{{\mathrm{NN}},i}$ by leveraging the affine bounds from linear relaxation--based perturbation analysis (LiRPA) method~\cite{Zhang2018CROWN}. Specifically, 
by \cite[Theorem 3.2]{Zhang2018CROWN}, for any  $i\in\{1,2,\dots,n\}$, there exist $\underline{{\gamma}}_{u,i}\in\R^{m}$, $\overline{{\gamma}}_{u,i}\in\R^{m}$, $\underline{{\gamma}}_{x,i}\in\R^{n}$, $\overline{{\gamma}}_{x,i}\in\R^{n}$,  $\overline{\beta}_i\in\R$, and $\underline{\beta}_i\in\R$, 
%computed from the network weights, biases, and activation relaxations, 
such that 
\begin{align}\label{closureNN}
\underline{{\gamma}}_{u,i}^\top u+\underline{{\gamma}}_{x,i}^\top x+\underline{\beta}_i\!\le \!f_{\mathrm{NN},i}(x,u)\!\le\! \overline{{\gamma}}_{u,i}^\top u+\overline{{\gamma}}_{x,i}^\top x+\overline{\beta}_i
\end{align}
holds for any $x\in[\bm x], u\in[\bm u]$. 
By letting $\overline{\zeta}_i = \max_{x\in[\bm x]}\overline{{\gamma}}_{x,i}^\top x+\overline{\beta}_i$ and $\underline{\zeta}_i = \min_{x\in[\bm x]}\underline{{\gamma}}_{x,i}^\top x+\underline{\beta}_i$, equation \eqref{closureNN} yields
\begin{align}\label{closeNN2}
\underline{{\gamma}}_{u,i}^\top u+\underline{\zeta}_i\le f_{\mathrm{NN},i}(x,u)\le \overline{{\gamma}}_{u,i}^\top u+\overline{\zeta}_i.
\end{align}

Combining the control-affine enclosure for $f_{0,i}$ in \eqref{closef0} with that for $f_{\mathrm{NN},i}$ in \eqref{closeNN2}, we obtain a control-affine enclosure $[f]_{[\bm x]}(u)$ for $f$ as expressed in \eqref{closuref}-\eqref{eq:combined_img}, where $\underline{a}_i={\delta}_{i}+\underline{{\gamma}}_{u,i}$, $\overline{a}_i={\delta}_{i}+\overline{{\gamma}}_{u,i}$, $\underline{c}_i=\underline{\eta}_i+\underline{\zeta}_i$, and $\overline{c}_i=\overline{\eta}_i+\overline{\zeta}_i$.

\section{Finite Input Search Reduction via Hyperplane Arrangement}
\label{ssec:critical_ctrl}

In this section, we introduce hyperplane arrangements in the control space based on the control-affine enclosure developed in Section \ref{ssec:affine}, and show how they enable the reduction of the infinite control search required for verifying the CIS condition to a finite set of evaluations.  

Suppose that $\cX$ and $\cU$ are boxes given by $\cX = \bigtimes_{j=1}^{n} [\underline{x}_j,\, \overline{x}_j]$ and $\cU = \bigtimes_{k=1}^{m} [\underline{u}_k,\, \overline{u}_k]$, respectively, and $\mathbb{S}$ is a subpaving of $\cX$ given as $\mathbb{S} = \{[\bm b_1], \ldots, [\bm b_N]\}$. To verify the relaxed CIS condition introduced in Section \ref{sec:suffCIS}, we aim to determine, for each $[\bm b] \in \mathbb{S}$, whether there exists a control input $u \in \cU$ such that $[f]_{[\bm b]}(u) \subseteq \mathbb{S}$, where $[f]_{[\bm b]}(u)$ denotes the control-affine 
enclosure~\eqref{eq:combined_img} constructed over $[\bm b]$. 
%As established in Section~\ref{sec:suffCIS}, fix a subpaving $\mathbb{S} = \{[\bm b_1], \ldots, [\bm b_N]\}$ of $\cX$.
% Our goal is to test whether $\mathbb{S}$ satisfies the relaxed controlled invariance condition, i.e., for each box $[\bm b] \in \mathbb{S}$, to find $u \in \cU$ such that $[f]_{[\bm b]}(u) \subseteq \mathbb{S}$, where $[f]_{[\bm b]}(u)$ is the control-affine 
% enclosure~\eqref{eq:combined_img} constructed over $[\bm b]$. 
%This test can be regarded as a simpler case: we want to determine whether a box ($[f]_{[\bm b]}(u)$) is contained in a union of boxes ($\mathbb{S}$). 
As $u$ varies continuously in the control space $\cU$, the enclosure box $[f]_{[\bm b]}(u)$ also varies continuously in the state space $\cX$. Therefore, the set containment relation above can change only when a lower or upper bound of the box $[f]_{[\bm b]}(u)$ crosses the coordinate of the boundary of boxes in $\mathbb{S}$. Thus, the containment relation changes in a discrete manner. 

Mathematically, for each coordinate $j \in \{1,\ldots,n\}$, let $G_j := \{g_j^{(q)}\}_{q=1}^{N_j}$ denote the sorted set of all $j$-th coordinates of vertices of boxes in $\mathbb{S}$, i.e., $g_j^{(1)} < g_j^{(2)} < \cdots < g_j^{(N_j)}$.
Letting $p = (p_1, \ldots, p_n)$ as the indexes of the boxes induced by $G_j$, where $ p_j\in\{1, \ldots, N_j - 1\}$, we define the \emph{grid cell} as $  C^\circ_p \;:=\; 
  \bigtimes_{j=1}^{n}
  \bigl(g_j^{(p_j)},\; g_j^{(p_j+1)}\bigr).$
%We illustrate these concepts through a concrete example in 2-D state space.

%even though $[f]_{[\bm b]}(u)$ changes continuously, the truth value of the containment relation does not change continuously with $u$; rather, it changes in a discrete manner.

\begin{figure}[t]
  \centering
  \includegraphics[width=\linewidth]{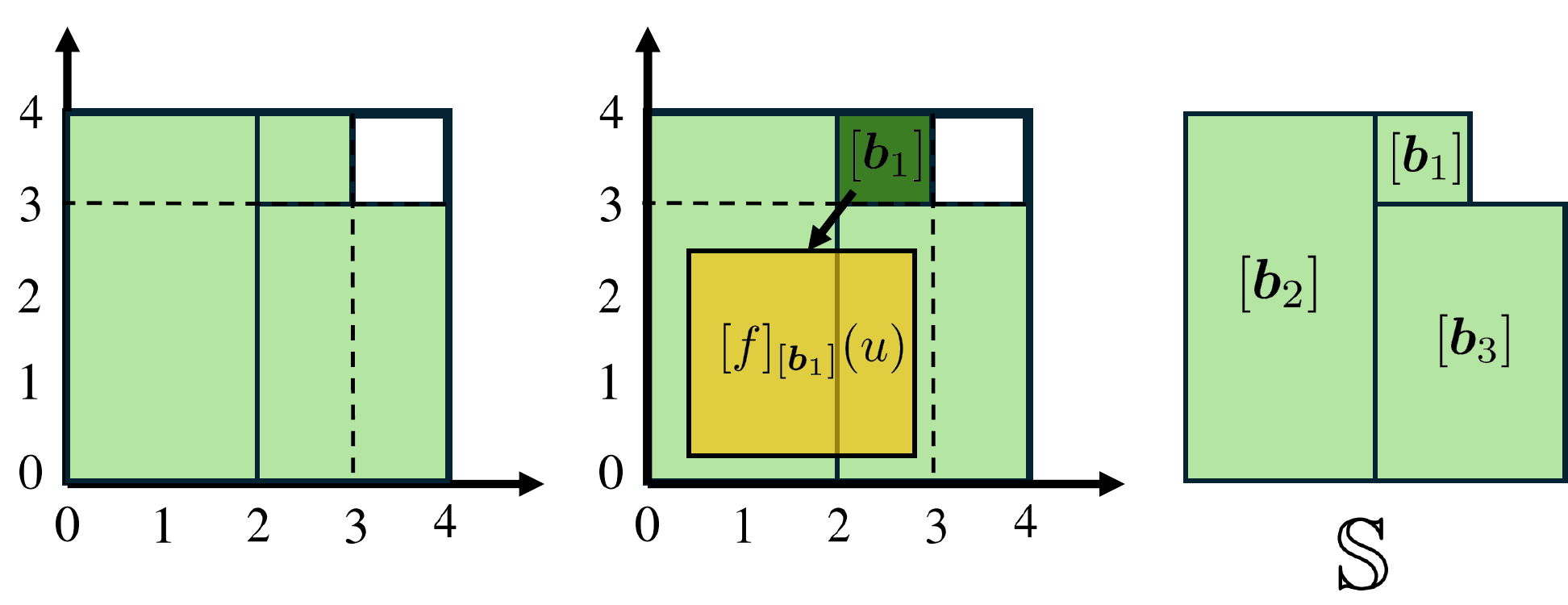}
  \caption{Illustration of Example~\ref{ex:2d_state} with $n=2$ and $\cX=[0,4]\times[0,4]$. 
  Left: the subpaving $\mathbb{S}$ (light green) with the grid lines (including dashed lines) induced by 
  $G_1 = \{0, 2, 3, 4\}$ and $G_2 = \{0, 3, 4\}$.
  Middle: the box $[\bm b_1] \in \mathbb{S}$ (dark green) and the control-affine enclosure box $[f]_{[\bm b_1]}(u)$ for a particular $u$ (yellow).
  Right: the subpaving $\mathbb{S} = \{[\bm b_1], [\bm b_2], [\bm b_3]\}$.}
  \label{fig:example_2d}
\end{figure}

% We begin by introducing some relevant concepts in the state space. 
% For each coordinate $j \in \{1,\ldots,n\}$, let $G_j := \{g_j^{(q)}\}_{q=1}^{N_j}$ denote the sorted set of all $j$-th coordinates of vertices of boxes in $\mathbb{S}$, i.e., $g_j^{(1)} < g_j^{(2)} < \cdots < g_j^{(N_j)}$.
% Note that $g_j^{(1)} = \underline{x}_j$ and $g_j^{(N_j)} = \overline{x}_j$.
% For each $p = (p_1, \ldots, p_n) \in \bigtimes_{j=1}^{n}\{1, \ldots, N_j - 1\}$, we define the \emph{grid cell} $  C^\circ_p \;:=\; 
%   \bigtimes_{j=1}^{n}
%   \bigl(g_j^{(p_j)},\; g_j^{(p_j+1)}\bigr).$
% We illustrate these concepts through a concrete example in 2-D state space.

\begin{example}\label{ex:2d_state}
Consider $n = 2$ and $\cX = [0,4] \times [0,4]$, as illustrated in Figure~\ref{fig:example_2d}.
Suppose $\mathbb{S} = \{[\bm b_1], [\bm b_2], [\bm b_3]\}$ is a subpaving of $\cX$ whose boxes have boundaries with 
coordinates $G_1 = \{0, 2, 3, 4\}$ and $G_2 = \{0, 3, 4\}$, so that $N_1 = 4$ and $N_2 = 3$.
The grid cells are the $(N_1 - 1) \times (N_2 - 1) = 3 \times 2 = 6$ open rectangles $C^\circ_p = (g_1^{(p_1)}, g_1^{(p_1+1)}) \times (g_2^{(p_2)}, g_2^{(p_2+1)})$; for instance, $C^\circ_{(1,1)} = (0,2) \times (0,3)$ and 
$C^\circ_{(2,2)} = (2,3) \times (3,4)$.
In Figure~\ref{fig:example_2d} (left), the grid cells contained in $\mathbb{S}$ are those covered by light green, 
while the remaining grid cell $C^\circ_{(3,2)}$ (white) is disjoint from $\mathbb{S}$.
Now choose the box $[\bm b_1] \in \mathbb{S}$ to be tested (the dark green box in Figure~\ref{fig:example_2d}, middle).
For a particular control $u$, the enclosure $[f]_{[\bm b_1]}(u)$ is the yellow rectangle.
In this configuration, $[f]_{[\bm b_1]}(u)$ intersects two grid cells $C^\circ_{(1,1)}$ and $C^\circ_{(2,1)}$, 
both of which lie inside $\mathbb{S}$, so the containment $[f]_{[\bm b_1]}(u) \subseteq \mathbb{S}$ holds.
If $u$ were to change so that $[f]_{[\bm b_1]}(u)$ shifts to the right and begins to intersect the white grid cell, or moves to extend beyond $\cX$, the containment would fail.
\end{example}

\begin{figure}[t]
  \centering
  \includegraphics[width=\linewidth]{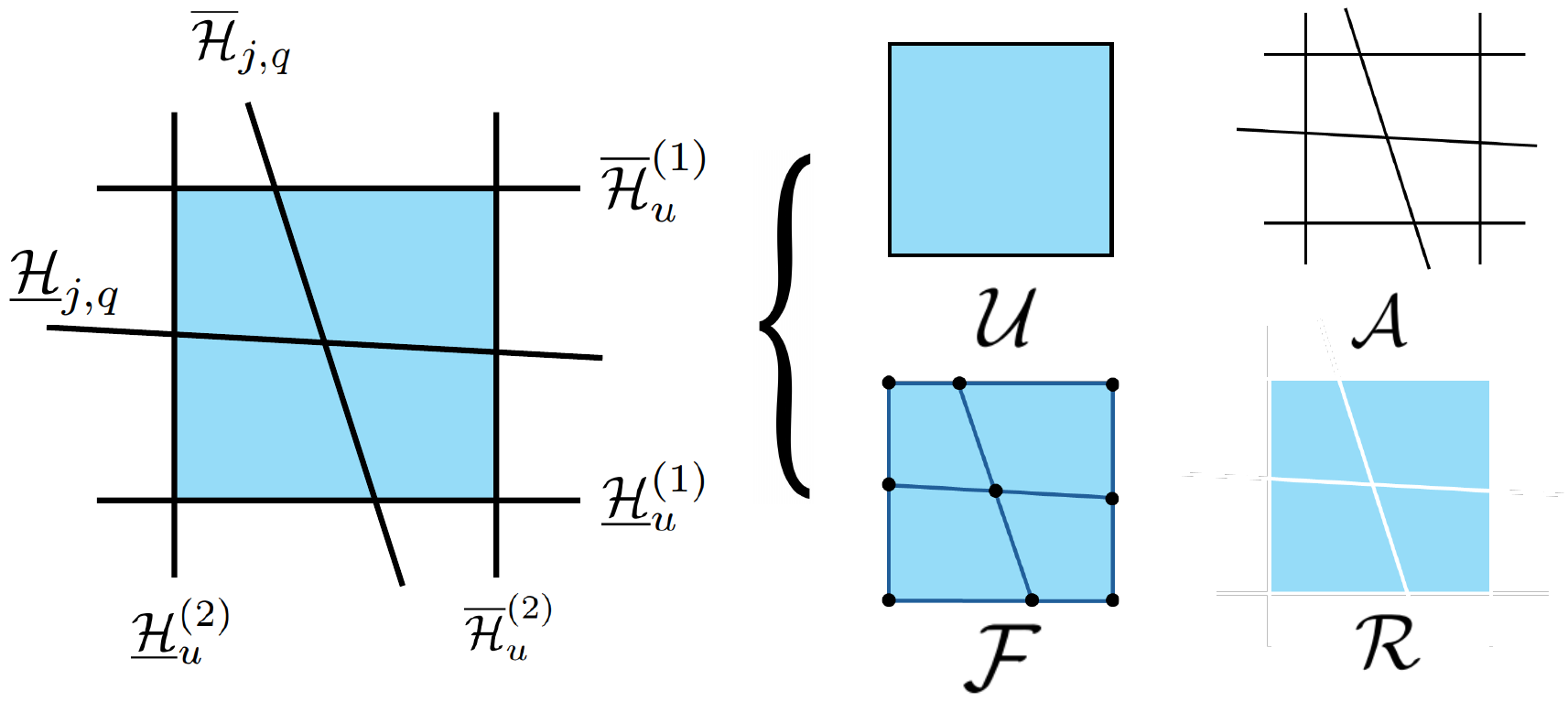}
  \caption{Illustration of Example~\ref{ex:2d_control} with $m=2$. Left: the combined view. Right: four components shown separately. $\mathcal{U}$: the control box. $\mathcal{A}$: the hyperplane arrangement $\mathcal{A}([\bm b])$, consisting of the hyperplanes $\underline{\mathcal{H}}_{j,q}$ and $\overline{\mathcal{H}}_{j,q}$ from~\eqref{eq:critical_eq} together with the boundary hyperplanes $\underline{\mathcal{H}}_u^{(k)}$ and $\overline{\mathcal{H}}_u^{(k)}$ of $\cU$. $\mathcal{F}$: the faces of $\mathcal{A}([\bm b])$ within $\cU$, including vertices, open line segments, and open polygons. $\mathcal{R}$: the regions, i.e., the full-dimensional faces.}
  \label{fig:example_control}
\end{figure}

Now we introduce hyperplane arrangements in the control space $\cU$. 
From the preceding discussion, it is clear that the containment $[f]_{[\bm b]}(u) \subseteq \mathbb{S}$ can change only when a lower or upper bound of $[f]_{[\bm b]}(u)$ crosses a grid coordinate $g_j^{(q)} \in G_j$.
Since each bound depends affinely on $u$, each such crossing event corresponds to an affine hyperplane in the control space $\cU$.
More precisely, for each $j \in \{1,\ldots,n\}$ and each $q \in \{1,\ldots,N_j\}$, the crossing of the lower and upper bounds of $[f]_{[\bm b]}(u)$ with $g_j^{(q)}$ defines the two types of affine hyperplanes
\begingroup
\setlength{\abovedisplayskip}{3pt}  % default is ~12pt
\setlength{\belowdisplayskip}{3pt}
\begin{equation}\label{eq:critical_eq}
\begin{aligned}
  \underline{\mathcal{H}}_{j,q}
  &:=
  \bigl\{u \in \R^m :
    \underline{a}_j^{\top} u + \underline{c}_j 
    = g_j^{(q)}\bigr\},\\
  \overline{\mathcal{H}}_{j,q}
  &:=
  \bigl\{u \in \R^m :
    \overline{a}_j^{\top} u + \overline{c}_j 
    = g_j^{(q)}\bigr\},
\end{aligned}
\end{equation}
\endgroup
where $\underline{a}_j, \overline{a}_j, \underline{c}_j, \overline{c}_j$ are the enclosure coefficients determined by $[\bm b]$ and $\cU$ via~\eqref{eq:combined_img}.
To account for the boundary of $\cU$, we include the $2m$ additional boundary hyperplanes $\underline{\mathcal{H}}_u^{(k)} :=\{u \in \R^m : u_k = \underline{u}_k\}$ and $\overline{\mathcal{H}}_u^{(k)} := 
\{u \in \R^m : u_k = \overline{u}_k\}$ for $k = 1, \ldots, m$. Collecting all these hyperplanes yields a hyperplane arrangement in $\R^m$, denoted by $\mathcal{A}([\bm b])$. Recall from Section~\ref{sec:pre} that each face of $\mathcal{A}([\bm b])$ is a maximal connected set of points sharing the same sign pattern, and that the full-dimensional faces are called regions. Clearly, the arrangement $\mathcal{A}([\bm b])$ partitions $\cU$ into finitely many faces. We denote $\mathcal{F}$ as the set of all faces of $\mathcal{A}([\bm b])$ contained in $\cU$, and $\mathcal{R} = \{R_1, \ldots, R_N\} \subset \mathcal{F}$ the set of regions.

%We again illustrate these concepts through a concrete example.

\begin{example}\label{ex:2d_control}
Consider a two-dimensional control space with $\cU = [\underline{u}_1, \overline{u}_1] \times [\underline{u}_2, \overline{u}_2] \subset \R^2$ as illustrated in Figure~\ref{fig:example_control}.
For a fixed box $[\bm b] \in \mathbb{S}$, suppose that, as $u$ varies in $\cU$, the lower bound of $[f]_{[\bm b]}(u)$ in coordinate $j$ attains the grid coordinate $g_j^{(q)}$, i.e., $\underline{a}_j^\top u + \underline{c}_j = g_j^{(q)}$.
Since $m = 2$, this equation defines a line $\underline{\mathcal{H}}_{j,q}$ in $\R^2$.
Similarly, the upper bound meeting $g_j^{(q)}$ gives another line $\overline{\mathcal{H}}_{j,q}$.
Repeating this for all coordinates $j = 1, \ldots, n$ and all grid indices $q = 1, \ldots, N_j$ produces the full collection of hyperplanes.
However, for illustration, we only focus on the two hyperplanes $\underline{\mathcal{H}}_{j,q}$ and $\overline{\mathcal{H}}_{j,q}$.
Together with the four boundary lines $\underline{\mathcal{H}}_u^{(k)}$, $\overline{\mathcal{H}}_u^{(k)}$ for $k=1,2$, these lines form the arrangement $\mathcal{A}([\bm b])$.
The faces $\mathcal{F}$ of $\mathcal{A}([\bm b])$ within $\cU$, bottom left) 
consist of 9 vertices, 12 open line segments, and 4 open convex polygonal regions $\mathcal{R} = \{R_1, R_2, R_3, R_4\}$.
\end{example}

%We have now established how the geometry of $[f]_{[\bm b]}(u)$ and $\mathbb{S}$ in the state space induces a hyperplane arrangement $\mathcal{A}([\bm b])$ in the control space. 
%The following proposition establishes how the geometry of $[f]_{[\bm b]}(u)$ and $\mathbb{S}$ in the state space induces a hyperplane arrangement $\mathcal{A}([\bm b])$ in the control space. 
%formalizes the key results of this structure.

\begin{proposition}\label{prop:finite_bp}
Let $[\bm b] \in \mathbb{S}$ and assume that $[f]_{[\bm b]}(u)$ is a non-degenerate box for every $u \in \cU$.
Define the covering indicator $\varphi(u) := \mathbb{I}([f]_{[\bm b]}(u) \subseteq \mathbb{S}) \in \{0,1\}$.
The following two statements hold:
\begin{enumerate}[label=(\roman*), nosep]
  \item\label{item:face_const}
  For each face $F \in \mathcal{F}$, $\varphi(u)$ is invariant for any $u \in F$.

  \item\label{item:closure_prop}
  If $\varphi(u) = 1$ on a region $R_i \in \mathcal{R}$, then $\varphi(u) = 1$ on every face contained in $\overline{R}_i$.
\end{enumerate}
\end{proposition}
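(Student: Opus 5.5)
The plan is to reduce the containment $[f]_{[\bm b]}(u)\subseteq\mathbb{S}$ to a combinatorial statement about which grid cells the enclosure box meets. That statement depends only on the order relations between the bounds $\underline{a}_j^\top u+\underline{c}_j$, $\overline{a}_j^\top u+\overline{c}_j$ and the grid coordinates $g_j^{(q)}$, and these order relations are exactly what the sign pattern of $\mathcal{A}([\bm b])$ records.

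\textbf{Step 1 (grid-cell characterization).} Show that for a non-degenerate box $Y=\bigtimes_j[\underline{y}_j,\overline{y}_j]$,
\[
Y\subseteq\mathbb{S}\iff Y\subseteq\cX \text{ and every grid cell } C^\circ_p \text{ with } C^\circ_p\cap \Int(Y)\neq\emptyset \text{ satisfies } C^\circ_p\subseteq\mathbb{S}.
\]
The grid refines $\mathbb{S}$: each box of $\mathbb{S}$ is a union of closures of grid cells, so each open cell is either contained in $\mathbb{S}$ or disjoint from it. For $(\Rightarrow)$, if an open cell meets $\Int(Y)\subseteq\mathbb{S}$, it meets $\mathbb{S}$, hence lies in $\mathbb{S}$. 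For $(\Leftarrow)$, $Y\subseteq\cX$ is covered by the closures of the cells meeting $\Int(Y)$, because $Y$ is the closure of its interior by non-degeneracy. The closures of cells lying in $\mathbb{S}$ are in $\mathbb{S}$, since $\mathbb{S}$ is closed.

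\textbf{Step 2 (dependence on sign pattern).} The condition $C^\circ_p\cap\Int(Y)\neq\emptyset$ is equivalent, coordinatewise, to $\underline{y}_j<g_j^{(p_j+1)}$ and $\overline{y}_j>g_j^{(p_j)}$. The condition $Y\subseteq\cX$ reads $\underline{y}_j\ge g_j^{(1)}$ and $\overline{y}_j\le g_j^{(N_j)}$; here the extreme grid coordinates equal the bounds of $\cX$, assuming $\mathbb{S}$ reaches them. If it does not, I would add the bounds of $\cX$ to $G_j$, which only refines the arrangement. Substituting $Y=[f]_{[\bm b]}(u)$, every such relation is determined by $\operatorname{sgn}(\underline{a}_j^\top u+\underline{c}_j-g_j^{(q)})$ and $\operatorname{sgn}(\overline{a}_j^\top u+\overline{c}_j-g_j^{(q)})$, which are entries of the sign pattern $s(u)$ with respect to $\mathcal{A}([\bm b])$. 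Non-degeneracy for every $u\in\cU$ ensures that Step 1 applies at each $u$. Since all points of a face $F$ share one sign pattern, $\varphi$ is constant on $F$, which proves (i).

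\textbf{Step 3 (closure property).} Let $F\subseteq\overline{R}_i$ be a face and $u\in F$. I pick a sequence $u_k\in R_i$ with $u_k\to u$, so $[f]_{[\bm b]}(u_k)\subseteq\mathbb{S}$. The bounds are continuous (affine) in $u$, so every point $y\in[f]_{[\bm b]}(u)$ is a limit of points $y_k\in[f]_{[\bm b]}(u_k)$; concretely, take $y_k$ to be the coordinatewise clipping of $y$ to $[f]_{[\bm b]}(u_k)$. Because $\mathbb{S}$ is a finite union of closed boxes, it is closed, so $y\in\mathbb{S}$ and therefore $\varphi(u)=1$. Part (i) then extends this to all of $F$, although this step already covers each point of $F$.

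The main obstacle is making Step 1 rigorous at the boundary. One issue is the case where the enclosure's face lies exactly on a grid hyperplane; this is why I use the interior $\Int(Y)$ and strict inequalities. The other is the requirement that the boundary of $\cX$ be captured by the arrangement. I would handle the first through the non-degeneracy assumption and the second by the augmentation of $G_j$ noted in Step 2.
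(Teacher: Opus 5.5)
\textbf{Gap in part (i): handling the outside of the grid.}
Your Step 1 tests $Y\subseteq\cX$ as a separate condition. When $\mathbb{S}$ does not reach $\partial\cX$ (which is allowed, since $\mathbb{S}$ is an arbitrary subpaving of $\cX$), you propose adding the bounds of $\cX$ to $G_j$, on the grounds that this ``only refines the arrangement.'' That reasoning runs in the wrong direction. The added hyperplanes, such as $\underline{a}_j^\top u+\underline{c}_j=\underline{x}_j$ and $\overline{a}_j^\top u+\overline{c}_j=\overline{x}_j$, are not in $\mathcal{A}([\bm b])$, and they generally cut its faces into smaller pieces. Your Step 2 then shows only that $\varphi$ is determined by the sign pattern of the enlarged arrangement, i.e.\ that $\varphi$ is constant on the pieces. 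Constancy on the pieces of a refinement does not give constancy on the faces $F\in\mathcal{F}$ of $\mathcal{A}([\bm b])$. That constancy is what (i) asserts, and it is what Algorithm~\ref{alg:inner} needs, since it tests one point per region of the unaugmented arrangement.

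\textbf{The repair.}
Replace $\cX$ in Step 1 by the bounding box $\bigtimes_{j=1}^{n}[g_j^{(1)},g_j^{(N_j)}]$, which contains $\mathbb{S}$. Then:
\begin{enumerate}[label=(\alph*), nosep]
  \item $Y\subseteq\mathbb{S}$ still forces $Y$ into this box, so your $(\Rightarrow)$ direction is unchanged.
  \item The closures of the bounded grid cells cover this box, so your $(\Leftarrow)$ argument goes through verbatim.
  \item The conditions $\underline{y}_j\ge g_j^{(1)}$ and $\overline{y}_j\le g_j^{(N_j)}$ are read off from the signs for $\underline{\mathcal{H}}_{j,1}$ and $\overline{\mathcal{H}}_{j,N_j}$, which already belong to $\mathcal{A}([\bm b])$.
\end{enumerate}
The paper achieves the same thing by adjoining $g_j^{(0)}=-\infty$ and $g_j^{(N_j+1)}=+\infty$. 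The resulting unbounded cells are all disjoint from $\mathbb{S}$, so leaving the grid is detected as meeting a bad cell, with no reference to $\cX$. With this change, your proof of (i) coincides with the paper's.

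\textbf{Part (ii) is correct but takes a different route.}
The paper proves that $\{u\in\cU:\varphi(u)=1\}$ is closed in $\cU$ by writing its complement as a finite union of sets cut out by strict inequalities, one for each grid cell disjoint from $\mathbb{S}$. You instead pass to the limit directly: for $u_k\in R_i$ with $u_k\to u$, clip $y\in[f]_{[\bm b]}(u)$ to $[f]_{[\bm b]}(u_k)$, then use continuity of the affine bounds and closedness of $\mathbb{S}$. Your argument is more elementary. It also does not depend on the grid-cell characterization or on the non-degeneracy assumption, since it only needs each $[f]_{[\bm b]}(u_k)$ to be a nonempty box.
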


\begin{proof}
\ref{item:face_const}
% Since $|\mathcal{A}([\bm b])| \leq 2\sum_{j=1}^{n} N_j + 2m < \infty$, the arrangement $\mathcal{A}([\bm b])$ induces finitely many faces~\cite{orlik2013arrangements}.
By definition, all points on a face $F$ share the same sign pattern.
For the proof, we augment $G_j$ by setting $g_j^{(0)}:=-\infty$ and $g_j^{(N_j+1)}:=\infty$, and extend the definition of a grid cell to $C_p^\circ=\bigtimes_{j=1}^{n}(g_j^{(p_j)},g_j^{(p_j+1)})$ with $p_j\in\{0,\ldots,N_j\}$.
For any $u \in \cU$, let $\mathcal{C}(u)$ denote the set of grid cells intersected by $\mathrm{int}([f]_{[\bm b]}(u))$. 
Since $G_j$ contains all $j$-th coordinates of the boundaries of boxes in $\mathbb{S}$, each grid cell is either contained in $\mathbb{S}$ or disjoint from $\mathbb{S}$.

The non-degeneracy assumption ensures that $[f]_{[\bm b]}(u)$ has nonempty interior and satisfies $[f]_{[\bm b]}(u) = \overline{\mathrm{int}([f]_{[\bm b]}(u))}$. 
Since $\mathbb{S}$ is closed, $[f]_{[\bm b]}(u)\subseteq\mathbb{S}$ if and
only if $\mathrm{int}([f]_{[\bm b]}(u))\subseteq\mathbb{S}$. If
$\mathrm{int}([f]_{[\bm b]}(u))\not\subseteq\mathbb{S}$, then the open set
$\mathrm{int}([f]_{[\bm b]}(u))\cap(\R^n\setminus\mathbb{S})$ intersects at lease one grid cell disjoint from $\mathbb{S}$.
Therefore, $\varphi(u)=1$ if and only if every cell in $\mathcal{C}(u)$ is contained in $\mathbb{S}$. Equivalently, $\varphi(u)=0$ if and only if some cell in $\mathcal{C}(u)$ is disjoint from $\mathbb{S}$.

Let $u,u'\in F$. Since $u$ and $u'$ share the same sign pattern, it remains
to show that $\mathcal{C}(u)$ is completely determined by $s(u)$. 
The open box $\mathrm{int}([f]_{[\bm b]}(u))=\bigtimes_{j=1}^{n}(\underline{a}_j^{\top}u+\underline{c}_j, \overline{a}_j^{\top}u+\overline{c}_j)$ intersects a grid cell $C_p^\circ=\bigtimes_{j=1}^{n}(g_j^{(p_j)},g_j^{(p_j+1)})$ if and only if, for every $j$, $\underline{a}_j^{\top}u+\underline{c}_j<g_j^{(p_j+1)}$ and
$\overline{a}_j^{\top}u+\overline{c}_j>g_j^{(p_j)}$.
For $1\leq p_j\leq N_j-1$, the truth values of these two inequalities are
determined by the sign-pattern entries associated with $\underline{\mathcal{H}}_{j,p_j+1}$ and $\overline{\mathcal{H}}_{j,p_j}$, respectively, as defined in \eqref{eq:critical_eq}.
When $p_j=0$, the second inequality holds trivially because $g_j^{(0)}=-\infty$, while the first inequality is determined by the sign-pattern entry associated with $\underline{\mathcal{H}}_{j,1}$.
Similarly, when $p_j=N_j$, the first inequality holds trivially because
$g_j^{(N_j+1)}=\infty$, while the second inequality is determined by the
sign-pattern entry associated with $\overline{\mathcal{H}}_{j,N_j}$.
Therefore, whether $\mathrm{int}([f]_{[\bm b]}(u))$ intersects any given grid
cell is completely determined by $s(u)$, and hence so is $\mathcal{C}(u)$.
Since $s(u)=s(u')$, we have $\mathcal{C}(u)=\mathcal{C}(u')$, which implies
$\varphi(u)=\varphi(u')$. Thus, $\varphi$ is constant on $F$.

\ref{item:closure_prop}
We first prove that the set $U([\bm b])=\{u\in\cU:[f]_{[\bm b]}(u)\subseteq\mathbb{S}\}=\{u\in\cU:\varphi(u)=1\}$ is closed in $\cU$.
The complement of $U([\bm b])$ consists of those $u\in\cU$ for which $\mathrm{int}([f]_{[\bm b]}(u))$ intersects at least one grid cell $C_p^\circ$ disjoint from $\mathbb{S}$.
For any fixed such cell, this intersection is equivalent to the strict
inequalities $\underline{a}_j^{\top}u+\underline{c}_j<g_j^{(p_j+1)}$ and
$\overline{a}_j^{\top}u+\overline{c}_j>g_j^{(p_j)}$ for every $j$, where an
inequality involving $g_j^{(0)}=-\infty$ or $g_j^{(N_j+1)}=\infty$ is automatically satisfied.
Hence, the set of controls for which a fixed grid cell is intersected is open
in $\cU$. Since there are finitely many grid cells, the complement of $U([\bm b])$ is a finite union of open sets, so $U([\bm b])$ is closed in
$\cU$.

If $\varphi=1$ on a region $R_i$, then $R_i\subseteq U([\bm b])$.
Since $U([\bm b])$ is closed in $\cU$, we have
$\overline{R}_i\subseteq U([\bm b])$, and hence $\varphi=1$ for all points of
$\overline{R}_i$. Equivalently, $\varphi=1$ on every face contained in
$\overline{R}_i$.
% Thus, every face $F\subseteq\overline{R}_i$ satisfies $\varphi=1$ on~$F$.
% \medskip
% \noindent\emph{Proof of~\ref{item:global_equiv}.}
% The faces in $\mathcal{F}$ partition~$[u_s]$. Therefore, $\exists\,u\in[u_s]:\varphi(u)=1$ if and only if $\varphi=1$ on some face $F\in\mathcal{F}$. By part~\ref{item:face_const}, this holds if and only if
% $\varphi(u_F^*)=1$ for the representative of that face.
\end{proof}
% \HZ{Add some sentences to explain why Proposition \ref{prop:finite_bp} converts ``infinite search'' to finite.}

% Proposition~\ref{prop:finite_bp} shows that the infinite control search required for verifying the CIS condition can be reduced to a finite set of evaluations: 

Proposition~\ref{prop:finite_bp} shows that the covering indicator $\varphi(u)$ is invariant on each face, so it suffices to evaluate $\varphi$ at one representative point per face. Since the number of faces is finite, the search for an admissible control input in the CIS condition reduces to a finite set of candidates. Furthermore,  from the proof of Proposition~\ref{prop:finite_bp} it follows  that $U([\bm b])$ is a finite union of convex polytopes, since each face closure is a convex polytope and $\mathcal{A}([\bm b])$ has finitely many faces. Hence, $U([\bm b])$ provides a uniform inner approximation of the regulation map over the box~$[\bm b]$.

\section{Algorithms for MCIS Approximations}\label{ssec:complete}

%This section combines the results of the preceding sections into a complete algorithm for Problem~\ref{prob:main}.
In this section, we present Algorithms~\ref{alg:outer}-\ref{alg:escape} for constructing inner and outer approximations of the MCIS, along with their theoretical guarantees. In practice, the tightness of the control-affine enclosure depends on the width of $[\bm u]$, as the Taylor remainder bound of $f_0$ contains quadratic terms in $w([\bm u])$ while the LiRPA relaxation gap of $f_{\mathrm{NN}}$ grows with $w([\bm u])$. To mitigate the potential enclosure conservatism, we partition $\cU$ into control slices 
$$\cU = \bigcup_{i=1}^{N_u} [\bm u_s^{(i)}]$$ and compute a separate control-affine enclosure on each slice. In the following algorithms, we instantiate the generic subpaving $\mathbb{S}$ in Sections~\ref{sec:suffCIS}--\ref{ssec:critical_ctrl} as $\cC_k$, the candidate CIS subpaving at the $k$-th iteration.

Algorithm~\ref{alg:outer} takes as input the state constraint set $\cX$, the control constraint set $\cU$, a resolution parameter $\varepsilon > 0$, and the number of control slices $N_u$, and returns a subpaving $\cC$ that is certified to be a CIS, or $\emptyset$ if no CIS is found at the given resolution.
The algorithm performs a fixed-point iteration: starting from $\cC_0 = \{\cX\}$ (line~1), at each iteration $k$ it calls Algorithm~\ref{alg:inner} to compute $\cC_{k+1} \subseteq \cC_k$ (line~3) by retaining only those boxes that pass the relaxed CIS test. The iteration terminates when either $\cC_{k+1} = \emptyset$ (lines~4--5) or $\cC_{k+1} = \cC_k$ (lines~6--7).

Algorithm~\ref{alg:inner} processes each box $[\bm b] \in \cC_k$ from a queue $\mathcal{Q}$ (lines~1,~3--4). For each control slice $[\bm u_s^{(i)}]$ (line~2,~6), the algorithm computes the control-affine enclosure coefficients via~\eqref{eq:combined_img} (line~7), constructs the hyperplane arrangement $\mathcal{A}([\bm b])$ (line~8), and enumerates its regions (line~9).
For the enumeration of polyhedral regions induced by a hyperplane arrangement, standard region-enumeration methods can be employed, such as the Incremental Enumeration method~\cite{rada2018new}.
For each region $R$, one representative $u^* \in R$ is selected and tested for $[f]_{[\bm b]}(u^*) \subseteq \cC_k$ (lines~10--11). 
For computational efficiency, Algorithm~\ref{alg:inner} only enumerates and tests the full-dimensional regions of the hyperplane arrangement. By Proposition~\ref{prop:finite_bp}\ref{item:closure_prop}, once a region is certified feasible, all lower-dimensional faces contained in its closure are also feasible and therefore need not be tested separately.
If a feasible $u^*$ is found on any slice, $[\bm b]$ is added to $\cC_{k+1}$ (line~13-14); otherwise, if $w([\bm b]) > \varepsilon$, $[\bm b]$ is bisected into two boxes $[\bm b]^{(1)},[\bm b]^{(2)}$ satisfying $[\bm b]^{(1)}\cup[\bm b]^{(2)}=[\bm b]$ with disjoint interiors, which are then returned to the queue (lines~15--16). Boxes with $w([\bm b]) \leq \varepsilon$ that fail the test are discarded.

\begin{algorithm}[b]
\caption{Fixed-point iteration for CIS}\label{alg:outer}
\KwIn{$\mathcal{X},\;\mathcal{U},\;\varepsilon,\;N_u$}
\KwOut{$\cC$ as a CIS, or $\emptyset$}
$\cC_0 \leftarrow \{\mathcal{X}\}$; \quad $k\leftarrow 0$; \quad $\mathrm{converged}\leftarrow\mathrm{false}$\;
\While{$\neg\,\mathrm{converged}$}{
  $\cC_{k+1} \leftarrow \textsc{InPre}(\cC_k,\;\varepsilon,\;\mathcal{U},\;N_u)$\tcp*{Alg.~\ref{alg:inner}}
  
  \If{$\cC_{k+1}=\emptyset$}{
    \Return{$\emptyset$}\tcp*{No CIS at resolution $\varepsilon$}
  }
  \If{$\cC_{k+1}=\cC_k$}{
    $\mathrm{converged}\leftarrow\mathrm{true}$\;
  }
  $k\leftarrow k+1$\;
}
\Return{$\cC_k$}
\end{algorithm}

\begin{algorithm}[t]
\caption{\textsc{InPre}: Inner approximation of $\mathrm{Pre}(\cC_k)$}\label{alg:inner}
\KwIn{$\cC_k,\;\varepsilon,\;\cU,\;N_u$}
\KwOut{$\cC_{k+1}$ with $\cC_{k+1}\subseteq\cC_k$}
$\mathcal{Q} \leftarrow \cC_k$;\quad $\cC_{k+1} \leftarrow \emptyset$\;
Partition $\cU$ into $N_u$ slices $[\bm u_s^{(1)}],\ldots,[\bm u_s^{(N_u)}]$\;
\While{$\mathcal{Q} \neq \emptyset$}{
  $[\bm b] \leftarrow \mathrm{Pop}(\mathcal{Q})$\;
  % $\Phi_{\cC_k}([\bm b]) \leftarrow \emptyset$\;
  $\mathrm{feasible} \leftarrow \mathrm{false}$\;
  \For{$i=1,\ldots,N_u$}{
    $\bigl\{\underline{a}_j,\,\overline{a}_j,\,\underline{c}_j,\,\overline{c}_j\bigr\}_{j=1}^{n} \leftarrow [f]_{[\bm b]}$ over $[\bm u_s^{(i)}]$\tcp*{\eqref{eq:combined_img}}
    $\mathcal{A}([\bm b])\leftarrow\bigcup_{j=1}^{n}\bigcup_{q=1}^{N_j}\bigl\{\overline{\mathcal{H}}_{j,q},\underline{\mathcal{H}}_{j,q}\bigr\}\cup\bigcup_{k=1}^{m}
  \bigl\{\{u:u_k=\underline{u}_{s,k}^{(i)}\},\{u:u_k=\overline{u}_{s,k}^{(i)}\}\bigr\}$\;
    \ForEach{region\/ $R$ of\/ $\mathcal{A}([\bm b])$ within\/ $[\bm u_s^{(i)}]$}{
      Choose one $u^*\in R$\;
      \If{$\varphi(u^*)=1$}{
        $\mathrm{feasible} \leftarrow \mathrm{true}$\;
      }
    }
  }
  \uIf{$\mathrm{feasible}$}{
    $\cC_{k+1}\leftarrow\cC_{k+1}\cup\{[\bm b]\}$\;
  }
  \ElseIf{$w([\bm b])>\varepsilon$}{
    bisect $[\bm b]\to[\bm b]^{(1)},[\bm b]^{(2)}$;\quad
    $\mathcal{Q}\leftarrow\mathcal{Q}\cup\{[\bm b]^{(1)},[\bm b]^{(2)}\}$\;
  }
}
\Return{$\cC_{k+1}$}
\end{algorithm}

% Algorithm~\ref{alg:inner} requires checking whether $[f](u^*)$ is contained in the union $\cC_k$. A naive scan over all boxes in $\cC_k$ per query would make this step the computational bottleneck. To alleviate this, we accelerate the verification using a spatial hashing method. 

% We construct a uniform-grid spatial index exactly once per iteration, requiring an $O(n(\cC_k))$ setup time, where $n(\cC_k)$ means the number of boxes in $\cC_k$. This makes each paving box registered into the hash buckets of its overlapping smallest grid units. For each image box $[f](u^*)$, we identify the minimal grid cells it covers. By querying the corresponding hash buckets of these cells (which store the indices of the subpaving boxes overlapping them), we can determine the candidate boxes that potentially intersect with $[f](u^*)$. This step drastically reduces the search space from scanning all boxes to evaluating only the potentially intersecting ones. Then, we iteratively subtract the volume of each candidate, allowing for an early return as soon as the residual becomes empty. 

% Because the hash index limits the candidate size to a small constant, this spatial hashing method reduces the per-query coverage check to an expected $O(1)$ complexity.

Algorithm~\ref{alg:escape} computes a subpaving $\cO$ of $\cX$ satisfying $\cO\subseteq\cX\setminus\cS^*$, which certifies that every box in $\cO$ lies outside the MCIS. Together with $\cC$ from Algorithm~\ref{alg:outer}, the remaining states that belong neither to $\cC$ nor to $\cO$ are collected in a subpaving $\mathbb{U}$. This yields a paving of $\cX$ satisfying $\cX=\mathbb{C} \cup \cO \cup \mathbb{U}$ where $\mathbb{C} \subseteq \cS^*$ and $\cO \subseteq \mathcal{X}\setminus \cS^*$. Since the procedure is adopted from ~\cite{monnet2016computing}, we briefly outline the idea below. 
For a given state box $[\bm b]$ and control slice $[\bm u_s^{(i)}]$, the inclusion function $[f]$ evaluated on the joint box $[\bm b] \times [\bm u_s^{(i)}]$ satisfies
\begingroup
\setlength{\abovedisplayskip}{3pt}  % default is ~12pt
\setlength{\belowdisplayskip}{3pt}
\begin{equation}\label{eq:reach_slice}
  [f]([\bm b],\,[\bm u_s^{(i)}]) \supseteq 
  \{f(x,u) : x \in [\bm b],\, u \in [\bm u_s^{(i)}]\}.
\end{equation}
\endgroup
This can be computed using standard interval enclosure techniques~\cite{moore2009introduction,chabert2009contractor}.
A box $[\bm b]$ is then classified as outside the MCIS if, for every control slice $i \in \{1,\ldots,N_u\}$, the over-approximated image $[f]([\bm b], [\bm u_s^{(i)}])$ either lies outside $\cX$ or is contained in the already-classified outside region $\cO_k$ at iteration $k$. The algorithm terminates when the outside subpaving stabilizes, i.e., $\cO_{k+1} = \cO_k$.
% \TY{Done.}\HZ{$\Box_{u\in[\bm u_s^{(i)}]} [f_i](u)$ may mean $\Box [f_i]([\bm u_s^{(i)}])$?}

\begin{algorithm}[t]
\caption{Determining States Outside MCIS}
\label{alg:escape}
\KwIn{$\cX,\;\cU,\;\varepsilon,\;N_u$}
\KwOut{$\cO$ with $\cO\subseteq\cX\setminus\cS^\ast$}
$\cO_0 \leftarrow \emptyset$;\quad $k \leftarrow 0$;\quad
  $\mathrm{converged}\leftarrow\mathrm{false}$\;
Partition $\cU$ into $N_u$ slices
  $[\bm u_s^{(1)}],\ldots,[\bm u_s^{(N_u)}]$\;
\While{$\neg\,\mathrm{converged}$}{
  $\mathcal{Q} \leftarrow \cX$;\quad
    $\cO_{k+1} \leftarrow \cO_k$\;
  \While{$\mathcal{Q} \neq \emptyset$}{
    $[\bm b] \leftarrow \mathrm{Pop}(\mathcal{Q})$\;
    $\mathrm{outside} \leftarrow \mathrm{true}$\;
    \For{$i=1,\ldots,N_u$}{
      Compute $[f]([\bm b],\,[\bm u_s^{(i)}])$%
        \tcp*{\eqref{eq:reach_slice}}
      \If{$[f]([\bm b],\,[\bm u_s^{(i)}])
            \cap \cX \not\subseteq \cO_k$}{
        $\mathrm{outside} \leftarrow \mathrm{false}$\;
      }
    }
    \uIf{$\mathrm{outside}$}{
      $\cO_{k+1} \leftarrow \cO_{k+1}
        \cup \{[\bm b]\}$\;
    }
    \ElseIf{$w([\bm b]) > \varepsilon$}{
      bisect $[\bm b]\to[\bm b]^{(1)},[\bm b]^{(2)}$;\quad
      $\mathcal{Q} \leftarrow \mathcal{Q}
        \cup \{[\bm b]^{(1)},[\bm b]^{(2)}\}$\;
    }
  }
  \If{$\cO_{k+1} = \cO_k$}{
    $\mathrm{converged}\leftarrow\mathrm{true}$\;
  }
  $k \leftarrow k+1$\;
}
\Return{$\cO_k$}
\end{algorithm}

The following theorems show the correctness and termination guarantees of the proposed algorithms.

\begin{theorem}\label{thm:correctness}
For any $\varepsilon > 0$ and $N_u \geq 1$, the subpaving $\cC$ returned by Algorithm~\ref{alg:outer} is a CIS for system~\eqref{eq:system}.
\end{theorem}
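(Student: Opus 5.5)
The plan is to exploit the termination condition of Algorithm~\ref{alg:outer}. If the algorithm returns $\emptyset$, the claim is trivial, since the empty set is vacuously a CIS. Otherwise it returns $\cC=\cC_k$ with $\cC_{k+1}=\cC_k$, where $\cC_{k+1}=\textsc{InPre}(\cC_k,\varepsilon,\cU,N_u)$. So it suffices to show that every box $[\bm b]$ placed into the output of Algorithm~\ref{alg:inner} lies in $\mathrm{Pre}(\cC_k)$. Granting that, $\cC=\cC_{k+1}\subseteq\mathrm{Pre}(\cC_k)=\mathrm{Pre}(\cC)$, viewing subpavings as point-set unions, which is exactly the characterization $\cS\subseteq\mathrm{Pre}(\cS)$ of a CIS. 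Note also that $\cC\subseteq\cX$, since every box ever produced is a sub-box (via bisection) of $\cX$.

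\textbf{Key step.} Fix a box $[\bm b]$ that Algorithm~\ref{alg:inner} adds to $\cC_{k+1}$. By construction there is a slice $i$ and a representative $u^*\in R\subseteq[\bm u_s^{(i)}]\subseteq\cU$ with $\varphi(u^*)=1$, i.e.\ $[f]_{[\bm b]}(u^*)\subseteq\cC_k$. The enclosure coefficients were computed over $[\bm b]\times[\bm u_s^{(i)}]$. By \eqref{closuref}--\eqref{eq:combined_img}, which combine \eqref{closef0} and \eqref{closeNN2}, we have $f(x,u^*)\in[f]_{[\bm b]}(u^*)$ for all $x\in[\bm b]$, because $u^*\in[\bm u_s^{(i)}]$. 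Hence $f(x,u^*)\in\cC_k$ for every $x\in[\bm b]$. This is precisely the soundness argument of Proposition~\ref{prop:sound_complete}\ref{item:soundness}, applied with the control-affine enclosure in the role of $[f]$. Therefore $[\bm b]\subseteq\mathrm{Pre}(\cC_k)$. Boxes that fail the test are bisected or discarded and never enter $\cC_{k+1}$, so $\cC_{k+1}\subseteq\mathrm{Pre}(\cC_k)$.

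\textbf{Expected obstacle.} The main point needing care is that the affine enclosure is valid only for $u$ in the slice over which it was built. We therefore must check that the region representative $u^*$ lies in $[\bm u_s^{(i)}]$ and not merely in $\R^m$. This holds because the arrangement includes the slice boundary hyperplanes and only regions within $[\bm u_s^{(i)}]$ are enumerated.

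A minor subtlety is that the equality $\cC_{k+1}=\cC_k$ should be read as equality of point sets, or of box collections, which implies the former. Since $\cC_{k+1}\subseteq\mathrm{Pre}(\cC_k)$ holds at every iteration, the fixed-point equality immediately yields $\cC\subseteq\mathrm{Pre}(\cC)$. Proposition~\ref{prop:finite_bp} is not needed for correctness, only for exhaustiveness of the search.
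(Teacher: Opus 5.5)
Your proof is correct and is essentially the paper's argument: validity of the control-affine enclosure on the slice gives $f([\bm b],u^*)\subseteq[f]_{[\bm b]}(u^*)\subseteq\cC$ for each retained box, and the CIS property then follows pointwise. You also spell out two points the paper leaves implicit, and you handle both correctly: the fixed point $\cC_{k+1}=\cC_k$ is what turns ``contained in $\cC_k$'' into ``contained in $\cC$'', and the representative $u^*$ must lie in the slice over which the enclosure was built.
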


\begin{proof}
The control-affine enclosure is a valid inclusion function for the full dynamics $f = f_0 + f_{\mathrm{NN}}$, hence for every box $[\bm b] \in \cC$, there exists $u \in \cU$ such that $f([\bm b], u) \subseteq [f]_{[\bm b]}(u) \subseteq \cC$. Note that $u$ may vary for different choices of $[\bm b]$.
Now let $x$ be any point in $\cC$. Then $x$ belongs to some box $[\bm b] \in \cC$, and therefore $f(x, u) \in f([\bm b], u) \subseteq \cC$.
Since this holds for every $x \in \cC$, we conclude that $\cC$ is a CIS.
\end{proof}

\begin{theorem}\label{thm:outer_sound}
For any $\varepsilon > 0$ and $N_u \geq 1$, the subpaving $\cO$ returned by Algorithm~\ref{alg:escape} satisfies $\cO\subseteq\cX\setminus\cS^*$.
\end{theorem}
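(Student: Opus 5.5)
The plan is to prove, by induction on the outer iteration index $k$ of Algorithm~\ref{alg:escape}, the stronger invariant $\cO_k\cap\cS^*=\emptyset$ for every $k$. The returned set is some $\cO_k$, so the theorem follows at once. The base case is trivial since $\cO_0=\emptyset$. Since $\cO_{k+1}$ is initialized to $\cO_k$ and only grows by boxes $[\bm b]$ marked ``outside'', the inductive step reduces to the following claim. If $\cO_k\cap\cS^*=\emptyset$ and $[\bm b]\subseteq\cX$ passes the test, i.e.\ $[f]([\bm b],[\bm u_s^{(i)}])\cap\cX\subseteq\cO_k$ for all $i$, then $[\bm b]\cap\cS^*=\emptyset$.

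To prove the claim I argue by contradiction. Suppose $x\in[\bm b]\cap\cS^*$. The key structural fact is that $\cS^*$ is itself a CIS, because a union of CISs is a CIS: each point of the union lies in some CIS $\cS$ and has a control keeping it in $\cS\subseteq\cS^*$. Hence there exists $u\in\cU$ with $f(x,u)\in\cS^*\subseteq\cX$. Since the slices cover $\cU$, $u\in[\bm u_s^{(i)}]$ for some $i$. By the inclusion property \eqref{eq:reach_slice}, $f(x,u)\in[f]([\bm b],[\bm u_s^{(i)}])$, and since also $f(x,u)\in\cX$, we get $f(x,u)\in[f]([\bm b],[\bm u_s^{(i)}])\cap\cX\subseteq\cO_k$. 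This contradicts $f(x,u)\in\cS^*$ and $\cO_k\cap\cS^*=\emptyset$. The case where the image lies entirely outside $\cX$ is covered automatically, since then the intersection is empty.

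One subtlety is that $\cO_{k+1}$ is built incrementally within a sweep, but the test always compares against the frozen $\cO_k$, so the induction on $k$ is clean. Termination is not needed for soundness: if the algorithm returns, it returns some $\cO_k$. I also need that popped boxes lie in $\cX$, which holds because the queue starts at $\cX$ and bisection preserves containment, so $\cO\subseteq\cX$.

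The main obstacle, mild as it is, is justifying that $\cS^*$ is a CIS; without it the witness control could lead into some other CIS rather than into $\cS^*$. It is also essential that the test checks every slice, so that whichever slice contains the witness control $u$ is covered.
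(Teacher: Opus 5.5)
Your proposal is correct and follows the paper's proof essentially step for step. It uses the same induction on $k$ for $\cO_k\cap\cS^*=\emptyset$, the same covering $\cU=\bigcup_i[\bm u_s^{(i)}]$ combined with \eqref{eq:reach_slice}, and the same reliance on $\cS^*\subseteq\mathrm{Pre}(\cS^*)$. The differences are cosmetic: you argue by contradiction from a point $x\in[\bm b]\cap\cS^*$ instead of showing directly that $x\notin\mathrm{Pre}(\cS^*)$, and you explicitly justify that $\cS^*$ is a CIS (a union of CISs is a CIS), which the paper uses without comment.
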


\begin{proof}
We show by induction on $k$ that
$\cO_k\cap\cS^*=\emptyset$.

\emph{Base case.}
$\cO_0=\emptyset$, so the claim holds trivially.

\emph{Inductive step.}
Assume $\cO_k\cap\cS^*=\emptyset$.
Let $[\bm b]$ be a box added to $\cO_{k+1}$, so that $[f]([\bm b],[\bm u_s^{(i)}])\cap\cX\subseteq\cO_k$ for all $i\in\{1,\ldots,N_u\}$.
Since $\cU = \bigcup_{i=1}^{N_u} [\bm u_s^{(i)}]$, for any $x\in[\bm b]$ and any $u\in\cU$, there exists $i$ such that $(x,u)\in[\bm b]\times[\bm u_s^{(i)}]$, so by~\eqref{eq:reach_slice}, $f(x,u)\in[f]([\bm b],[\bm u_s^{(i)}])$.
Since $[f]([\bm b],[\bm u_s^{(i)}])\cap\cX\subseteq\cO_k$, either $f(x,u)\notin\cX$ or $f(x,u)\in\cO_k$.
In the first case, $f(x,u)\notin\cS^*$ since $\cS^*\subseteq\cX$; in the second case, $f(x,u)\notin\cS^*$ by the inductive hypothesis $\cO_k\cap\cS^*=\emptyset$.
Since this holds for all $u\in\cU$, $x\notin\mathrm{Pre}(\cS^*)$.
By $\cS^*\subseteq\mathrm{Pre}(\cS^*)$, it follows that $x\notin\cS^*$.
Thus, $[\bm b]\cap\cS^*=\emptyset$ and $\cO_{k+1}\cap\cS^*=\emptyset$.
\end{proof}

\begin{theorem}\label{thm:termination}
For any $\varepsilon > 0$ and $N_u\geq 1$, Algorithms~\ref{alg:outer},~\ref{alg:inner}, and~\ref{alg:escape} terminate in a finite number of steps.
\end{theorem}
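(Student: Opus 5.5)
We argue termination in two layers: first that every single call to Algorithm~\ref{alg:inner} (and every inner while-loop of Algorithm~\ref{alg:escape}) finishes, and then that the outer fixed-point loops of Algorithms~\ref{alg:outer} and~\ref{alg:escape} run only finitely many times. All boxes ever produced are obtained from $\cX$ by repeated bisection, and a box of width $\le\varepsilon$ is never bisected. We therefore fix the bisection rule (say, bisecting the widest coordinate at its midpoint) and use the resulting finite bisection tree $\mathcal{T}_\varepsilon$ rooted at $\cX$. Every branch of this tree has bounded depth: after at most $n\lceil\log_2(w(\cX)/\varepsilon)\rceil$ bisections every coordinate width is $\le\varepsilon$. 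Hence $\mathcal{T}_\varepsilon$ has finitely many nodes, and every box touched by any of the algorithms is a node of $\mathcal{T}_\varepsilon$.

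\emph{Single call of Algorithm~\ref{alg:inner}.} Each popped box is handled in one of three ways: it is accepted, it is discarded, or it is replaced by its two children in $\mathcal{T}_\varepsilon$. The queue therefore only ever holds nodes of the finite tree. Each node is pushed at most once per call, because every box of $\cC_k$ is a node, distinct boxes have disjoint interiors, and so no two boxes in the queue share descendants. The number of pops is thus bounded by $|\mathcal{T}_\varepsilon|$.

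The work per pop is also finite. There are $N_u$ slices, and the arrangement $\mathcal{A}([\bm b])$ has at most $2\sum_j N_j+2m$ hyperplanes, so it has finitely many regions, which can be enumerated in finite time (e.g.\ by~\cite{rada2018new}). Each test $\varphi(u^*)$ is a containment check of one box in a finite union of boxes, and we can decide it via the grid-cell characterization in the proof of Proposition~\ref{prop:finite_bp}. The same counting argument applies to the inner loop of Algorithm~\ref{alg:escape}, where each test is a single interval evaluation together with a finite containment check.

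\emph{Outer loops.} This is the main point. For Algorithm~\ref{alg:outer}, we show that every $\cC_k$ is a finite set of nodes of $\mathcal{T}_\varepsilon$ and that the point sets satisfy $\cC_{k+1}\subseteq\cC_k$. To make this a strict decrease in a finite lattice, we identify each $\cC_k$ with the set $L_k$ of leaf-level (width $\le\varepsilon$) descendants it covers. If $\cC_{k+1}\ne\cC_k$ as subpavings, then some box was either discarded or split with at least one child eventually removed. If every box of $\cC_k$ is kept or split with all descendants kept, the point set is unchanged; we would handle this by arguing that a split box, once retained as children, stays in finer form, so the representation is monotone along a well-founded order. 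The cleanest choice is to measure progress by $|L_k|$ together with the number of boxes, ordered lexicographically, since both are bounded by $|\mathcal{T}_\varepsilon|$. In the subpaving lattice refinement is monotone, so there are only finitely many strict steps: the number of boxes can increase only through refinement, and refinement is bounded by the tree depth. The loop therefore reaches either $\emptyset$ or a fixed point in at most $2|\mathcal{T}_\varepsilon|$ iterations.

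For Algorithm~\ref{alg:escape} the argument is easier, since $\cO_{k+1}\supseteq\cO_k$ and each $\cO_k$ is a finite set of nodes of $\mathcal{T}_\varepsilon$. A strictly increasing chain of subsets of a finite set has length at most $|\mathcal{T}_\varepsilon|$, so $\cO_{k+1}=\cO_k$ must occur after finitely many iterations. The step we expect to be most delicate is the monotonicity bookkeeping for Algorithm~\ref{alg:outer}, where we must rule out a cycle of re-splittings that never produces an exactly equal subpaving. We plan to settle this by noting that any box of $\cC_k$ retained unsplit in $\cC_{k+1}$ stays a node, and that a split only moves strictly deeper in the finite tree.
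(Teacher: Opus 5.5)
Your proof is correct and takes the same route as the paper. A finite bisection tree at resolution $\varepsilon$ bounds each call of Algorithm~\ref{alg:inner} and each inner loop of Algorithm~\ref{alg:escape}, and monotonicity of $\cC_k$ and $\cO_k$ within the finite collection of subpavings built from that tree bounds the outer loops; you are more careful than the paper, which tacitly assumes a rule such as widest-coordinate bisection and does not address iterations that split boxes without shrinking the point set (your potential $(|L_k|,-|\cC_k|)$ handles these), though that lexicographic potential only gives a product-type bound, not your stated $2|\mathcal{T}_\varepsilon|$, which instead follows because each tree node is split or discarded at most once over the whole run.
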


\begin{proof}
In Algorithm~\ref{alg:inner}, each box popped from the queue is either added to $\cC_{k+1}$, bisected, or discarded. Bisection reduces the width of the box by half. After at most $\lceil\log_2(w(\cX)/\varepsilon)\rceil$ bisections along any single dimension, the box width falls below $\varepsilon$ and the box is discarded, so Algorithm~\ref{alg:inner} terminates.

% For Algorithm~\ref{alg:outer}, the invariance test can only retain or remove boxes, so $\cC_{k+1}\subseteq\cC_k$ at every iteration.
% Since the bisection tree of $\cX$ at resolution $\varepsilon$ has finite leaf nodes, the collection of all subpavings at this resolution is finite.
% The sequence $\cC_0\supseteq\cC_1\supseteq\cdots$ is a descending sequence taking values in a finite collection. Since a finite set admits no infinite strictly descending sequence, this sequence can only strictly decrease finite times and hence must stabilize.
% % That is, there exists $N$ such that $\psu{\cC_n}=\psu{\cC_N}$ for all $n\geq N$.

% For Algorithm~\ref{alg:escape}, the escape test can only retain or add boxes, so $\cO_k\subseteq\cO_{k+1}$ at every iteration.
% Since the bisection tree of $\cX$ at resolution $\varepsilon$ has finitely many leaf nodes, the collection of all subpavings at this resolution is finite.
% The sequence $\cO_0\subseteq\cO_1\subseteq\cdots$ is an ascending sequence taking values in a finite collection. Since a finite set admits no infinite strictly ascending sequence, this sequence can only strictly increase finite times and hence must stabilize.
For Algorithms~\ref{alg:outer} and~\ref{alg:escape}, the bisection tree of $\cX$ at resolution $\varepsilon$ has finitely many leaves, so the collection of all subpavings at this resolution is finite. Since $\cC_{k+1} \subseteq \cC_k$ and $\cO_k \subseteq \cO_{k+1}$ at every iteration, both sequences are monotone in a finite collection and hence must terminate.
\end{proof}

% \begin{remark}\label{rem:regulation}
%     Since each face closure is a convex polytope and $\mathcal{A}([\bm b])$ has finitely many faces, $U([\bm b])$ is a finite union of convex polytopes. Hence $U([\bm b])$ provides a uniform inner approximation of the regulation map over the box~$[\bm b]$.
% \end{remark}

%\begin{remark}\label{rem:parallel}
Since the computations for different boxes and different control representatives are independent, they are naturally parallelizable. In our implementation, state box and control slice pairs are collected into batches, and the matrix multiplications in the LiRPA bound propagation are executed concurrently on the GPU.

\section{Simulation Examples}
\label{sec:experiments}

We present two simulation examples to demonstrate the effectiveness of the proposed method. All experiments are conducted on a desktop equipped with an AMD Ryzen 7 7700X 8-Core CPU, 32\,GB RAM, and an NVIDIA GeForce RTX~5070 Ti GPU with 16\,GB VRAM.
% The lane-keeping example follows the setup of~\cite{li2025control}, using the same bicycle
% dynamics~\eqref{eq:bicycle_num} and the same $3$--$8$--$4$--$2$ MLP with $\text{ReLU}$ activations.
% The frozen-turbulence navigation example uses a $2$--$256$--$256$--$256$--$2$ MLP with $\tanh$ activations and is included to demonstrate that the proposed method
% applies directly to smooth activations and large networks without modification.

%\subsection{Lane-keeping}\label{ssec:lanekeeping}
% \begin{example}\label{ssec:lanekeeping} {\bf (Lane Keeping)} 
\subsection{Lane Keeping}\label{ssec:lanekeeping}

Consider the discrete-time bicycle model in \cite{li2025control}:
\begingroup
\setlength{\abovedisplayskip}{3pt}  % default is ~12pt
\setlength{\belowdisplayskip}{3pt}
\begin{equation}
\label{eq:bicycle_num}
\begin{aligned}
      y_{k+1} &= y_k + v\,dt\sin\theta_k,\\
    \theta_{k+1} &= \theta_k + {v\,dt} \tfrac{\tan u_k}{l_1},
\end{aligned}
\end{equation}
\endgroup
where $y_k\in\R$ is the lateral displacement of the vehicle, $\theta_k\in\R$ is the yaw angle, $u_k\in\R$ is the steering angle, and the parameters are chosen as $l_1=5$\,m (vehicle length), $l_2=2$\,m (vehicle width),  $v=6$\,m/s (vehicle speed), and $w=3.5$\,m (lane width). 
The state is defined as $x_k=(y_k,\;y_k+l_1\theta_k)^\top\in\R^2$, with scalar control $u_k\in\R$. We consider the NNCS \eqref{eq:system} where $f_0\equiv\bm{0}$ and $f_{\mathrm{NN}}$ is a $3$-layer, ReLU-activated FFNN with two hidden layers of sizes $n_1=8,n_2=4$, trained to imitate the bicycle model in \eqref{eq:bicycle_num}. Specifically, the network is of the form $x_{k+1}=f_{\mathrm{NN}}(x_k,u_k)$ and is trained on $2\times 10^5$ training samples and validated on $5\times 10^4$ samples for $200$ epochs, achieving a final validation MSE of $5.06\times 10^{-7}$. We consider the state constraint set $\cX = [-0.75,\;0.75]\times [-0.75,\;0.75]$ and two different control constraint sets: $\cU=[-5^\circ,\;5^\circ]$ or $[-10^\circ,\;10^\circ]$.

%\XX{what do you mean $\cU$ belongs to a set?}

% The state constraint set and control constraint sets are $\cX = [-0.75,\;0.75]^2,\;\cU \in \bigl\{[-5^\circ,\;5^\circ],\;[-10^\circ,\;10^\circ]\bigr\}$.
% $3$--$8$--$4$--$2$ neurons, taking $(x_k,u_k)$ as input and $x_{k+1}$ as output, serves as $f_{\mathrm{NN}}$ in~\eqref{eq:system} with $f_0\equiv\bm{0}$.
%input $(x_k,u_k)\in\R^3$, output $x_{k+1}\in\R^2$, and 

We fix $N_u = 3$ and vary the resolution $\varepsilon = \frac{w-l_2}{K}$ with $K \in \{32,\,64,\,128,\,256,\,512,\,1024\}$ in both Algorithm~\ref{alg:outer} and~\ref{alg:escape}, 
% \begin{equation}\label{eq:lk_resolutions}
%   \varepsilon = \frac{w-l_2}{K},
%   \qquad
%   K \in \{32,\,64,\,128,\,256,\,512,\,1024\},
% \end{equation}
yielding increasingly fine pavings. Since the control is a scalar, each hyperplane~\eqref{eq:critical_eq} reduces to a point in $\R$, and the regions of the hyperplane arrangement are open intervals whose midpoints could serve as the representatives in Proposition~\ref{prop:finite_bp}.
% \XX{which algorithm we run? } 
Figure~\ref{fig:lk_paving} shows the pavings $\cC,\cO,\mathbb{U}$ as results of Algorithm~\ref{alg:outer} and~\ref{alg:escape}, for varying $K$ and two different $\cU$. It can be observed that the size of $\mathbb{U}$, which is the set of states with  undetermined classification, decreases as $K$ increases, i.e, as the paving becomes finer with higher resolution. Table~\ref{tab:lk_runtime} reports the wall-clock runtimes of Algorithm~\ref{alg:outer} for computing the CIS $\cC$. 
%The runtime of Algorithm~\ref{alg:escape} for computing  $\cO$ is not reported, as computing an outer approximation of the MCIS is secondary to computing the CIS itself.
% \XX{time of running which algorithm? we should tie it to the specific algorithms/theorems in the simulation results.}
%\end{example}

% \begin{figure*}[t]
%   \centering
%   \includegraphics[width=\textwidth]{figures/example_1.pdf}
%   \caption{Simulation results for the lane keeping example in Section \ref{ssec:lanekeeping} with varying $K$ and two different control sets $\cU$. The subpaving $\cC$ (green boxes) represents a CIS as an inner approximation of the MCIS; $\cO$ (gray boxes) contains states outside the MCIS; and  $\mathbb{U}$ (yellow boxes) contains states   with  undetermined classification. It can be observed that the size of $\mathbb{U}$ decreases as $K$ increases, i.e, as the paving becomes finer with higher resolution. }
%   \label{fig:lk_paving}
% \end{figure*}

\begin{figure*}[t]
  \centering

  \begin{subfigure}[t]{\textwidth}
    \centering
    \includegraphics[width=\textwidth]{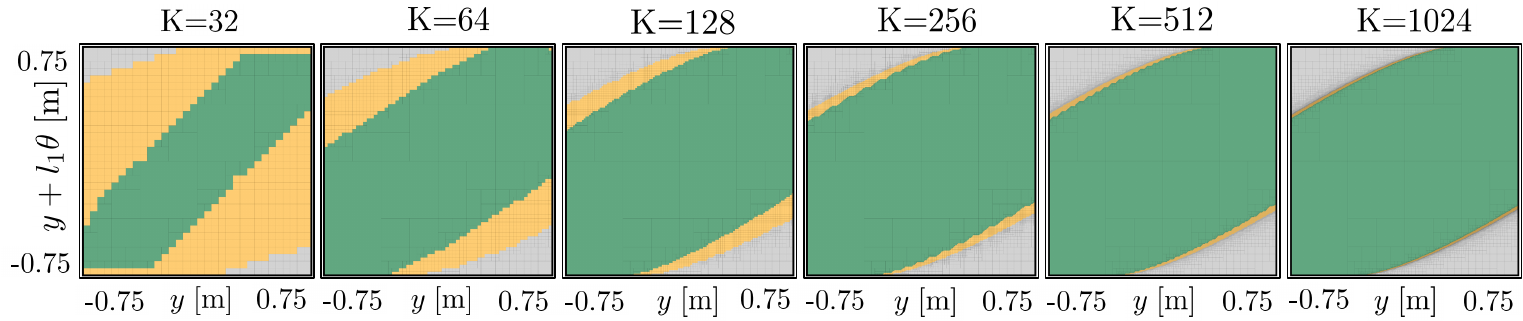}
    \caption{$\cU=[-5^\circ,\,5^\circ]$.}
    \label{fig:lk_paving_u5}
  \end{subfigure}

  \begin{subfigure}[t]{\textwidth}
    \centering
    \includegraphics[width=\textwidth]{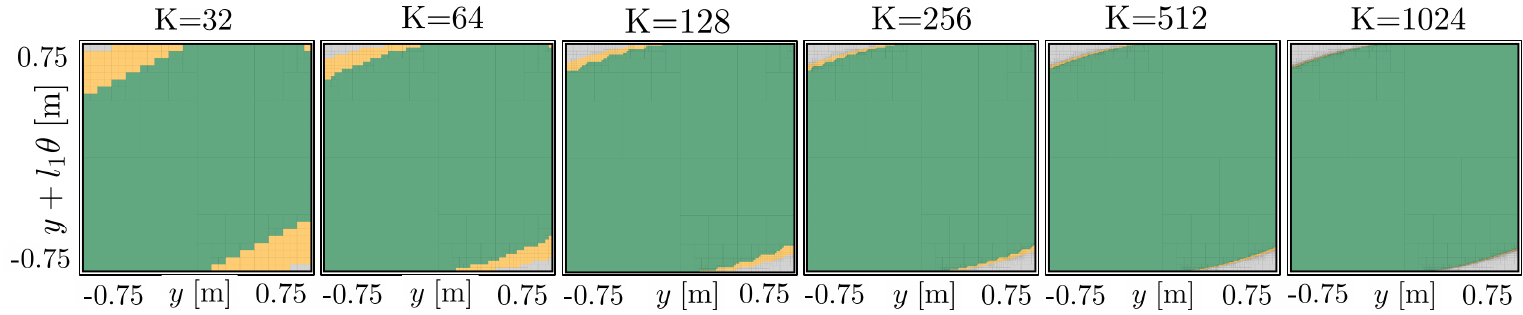}
    \caption{$\cU=[-10^\circ,\,10^\circ]$.}
    \label{fig:lk_paving_u10}
  \end{subfigure}

  \caption{Simulation results for the lane keeping example in Section \ref{ssec:lanekeeping} with varying $K$ and two different control sets $\cU$. The subpaving $\cC$ (green boxes) represents a CIS as an inner approximation of the MCIS; $\cO$ (gray boxes) contains states outside the MCIS; and  $\mathbb{U}$ (yellow boxes) contains states   with  undetermined classification. It can be observed that the size of $\mathbb{U}$ decreases as $K$ increases, i.e, as the paving becomes finer with higher resolution.}
  \label{fig:lk_paving}
\end{figure*}

\begin{table}[b]
  \centering
  \caption{Runtime (in sec) for computing the CIS $\cC$ in the lane keeping example.}
  \label{tab:lk_runtime}
  \small
  \renewcommand{\arraystretch}{1.0}
  \setlength{\tabcolsep}{6pt}
  \begin{tabular}{ccccccc}
    \toprule
    $\cU \backslash K$ & 32 & 64 & 128 & 256 & 512 & 1024 \\
    \midrule
    $[-5^\circ,5^\circ]$   & 0.35 & 0.56 & 0.91 & 1.14 & 1.49 & 3.81 \\
    $[-10^\circ,10^\circ]$ & 0.22 & 0.34 & 0.31 & 0.36 & 0.51 & 0.86 \\
    \bottomrule
  \end{tabular}
\end{table}

%\subsection{Flow Navigation in Frozen Synthetic Turbulence}\label{ssec:exp2}

% \begin{example}\label{ssec:exp2} {\bf (Flow Navigation in Frozen Synthetic Turbulence)} 
\subsection{Flow Navigation in Frozen Synthetic Turbulence}\label{ssec:exp2}
Consider a point agent moving in a two-dimensional frozen incompressible velocity field constructed via 
the random Fourier mode synthesis of~\cite{kraichnan1970diffusion}, with stream function $\psi(x,y) = \sum_{j=1}^{N} A_j \sin(k_{x,j} x + k_{y,j} y + \varphi_j)$ and velocity $\bm{V} = (\partial_y \psi,\, -\partial_x \psi)^\top$. We use $N = 40$ modes with wavevectors $(k_{x,j}, k_{y,j})$ sampled from $\{\pm 1, \ldots, \pm 8\}^2$, random phases $\varphi_j \sim \mathrm{Uniform}(0, 2\pi)$, and amplitudes $A_j = 8 / \|(k_{x,j}, k_{y,j})\|^{2.5}$. We train a $4$-layer, $\tanh$-activated FFNN, $\hat{\bm{V}} = (\hat V_x, \hat V_y)^\top \colon \R^2 \to \R^2$,  with three hidden layers of sizes $n_1 = n_2 = n_3 = 256$ to approximate the velocity field $\bm{V}$. The network is trained on $8 \times 10^5$ samples and validated on $8 \times 10^4$ samples for $500$ epochs, achieving a final validation MSE of $3.48 \times 10^{-4}$. Figure~\ref{fig:vel_field} shows the learned speed map and streamlines. 

\begin{figure}[t]
  \centering
  \includegraphics[width=\linewidth]{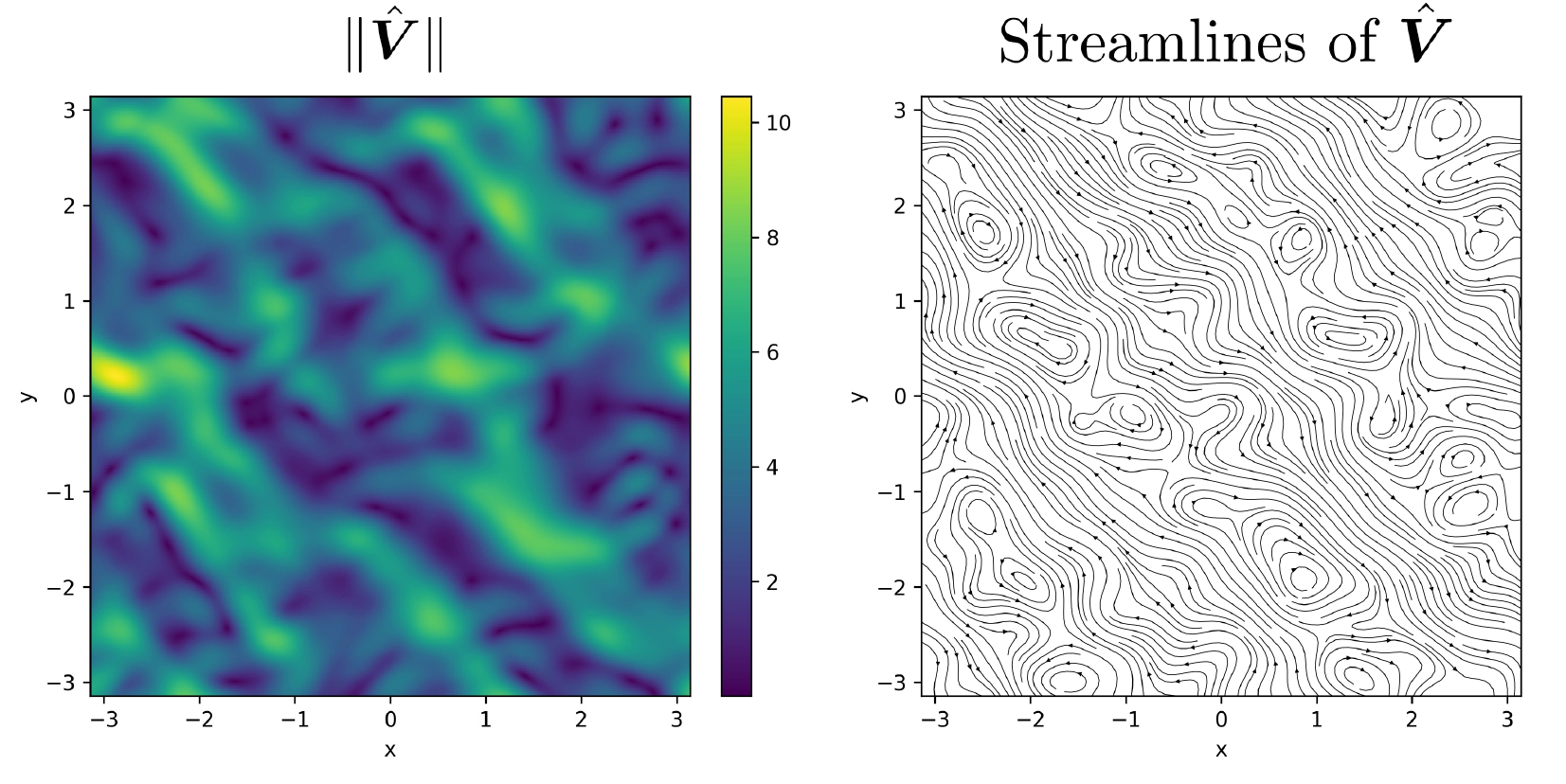}
  \vskip -2mm
  \caption{Learned velocity field $\hat{\bm{V}}$ produced by the FFNN in the flow navigation example.
  Left: speed map $\|\hat{\bm{V}}(x)\|$.
  Right: streamlines of $\hat{\bm{V}}$. }
  \label{fig:vel_field}
\end{figure}

\begin{figure}[t]
  \centering
  \includegraphics[width=\linewidth]{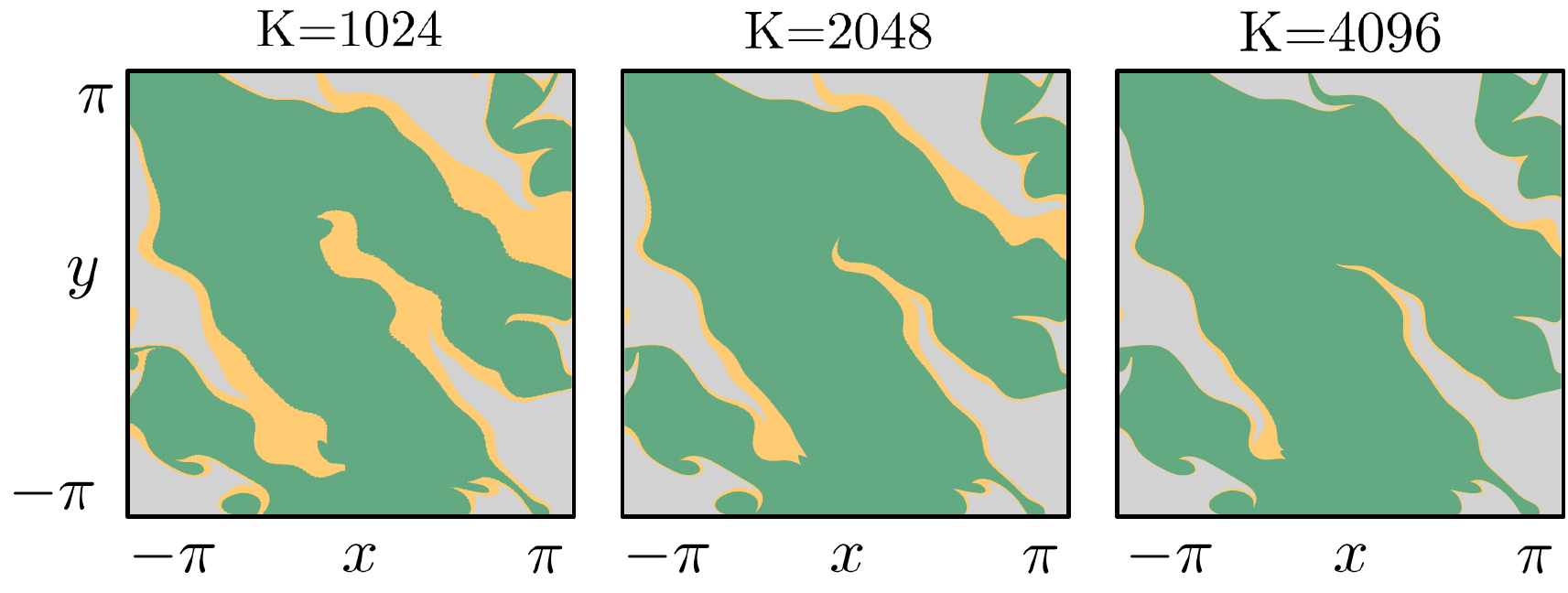}
  \vskip -2mm
  \caption{Simulation results for the flow navigation example in Section \ref{ssec:exp2} with varying $K$. The subpaving $\cC$ (green boxes) represents a CIS as an inner approximation of the MCIS; $\cO$ (gray boxes) contains states outside the MCIS; and  $\mathbb{U}$ (yellow boxes) contains states   with  undetermined classification. 
  %Grid lines are omitted for visual clarity. 
  }
  %\caption{Three-class pavings for the frozen-turbulence navigation example. Green boxes belong to $\cC$, gray boxes belong to $\cO$, and yellow boxes belong to $\mathbb{U}$. Grid lines are omitted for visual clarity.}
  \label{fig:flow_paving}
\end{figure}

% \XX{revise this part similar to Example 1}
% A $2$--$256$--$256$--$256$--$2$ $\tanh$ MLP $\hat{\bm{V}}=(\hat V_x,\hat V_y)^\top$ is trained to regress $\bm{V}$ using the MSE loss on $8\times 10^5$ training samples
% and validated on $8\times 10^4$ samples for $500$ epochs, achieving a final validation MSE of $3.48\times10^{-4}$.
% Figure~\ref{fig:vel_field} shows the learned speed map and streamlines.
% A $4$-layer, $\tanh$-activated FFNN with three hidden layers of sizes $n_1=n_2=n_3=256$ is trained to regress the velocity field $\bm{V}$, yielding a learned network $\hat{\bm{V}}=(\hat V_x,\hat V_y)^\top$. Specifically, the network is trained on $8\times10^5$ training samples and validated on $8\times10^4$ samples for $500$ epochs, achieving a final validation MSE of $3.48\times10^{-4}$. Figure~\ref{fig:vel_field} shows the learned speed map and streamlines.
Now consider the discrete-time flow navigation model:
\begingroup
\setlength{\abovedisplayskip}{3pt}  % default is ~12pt
\setlength{\belowdisplayskip}{3pt}
\begin{equation}\label{eq:flow_dyn_num}
\begin{aligned}
  x_{k+1} &= x_k + dt\bigl(\hat V_x(x_k,y_k) + u_{x,k}\bigr),\\
  y_{k+1} &= y_k + dt\bigl(\hat V_y(x_k,y_k) + u_{y,k}\bigr),
\end{aligned}
\end{equation}
\endgroup
where $x_k$ and $y_k$ denote the agent position coordinates, $u_{x,k}$ and $u_{y,k}$ are the control inputs, and $dt = 0.02$\,s is the sampling period.
The state is defined as $(x_k, y_k)^\top \in \R^2$ with control $u_k = (u_{x,k}, u_{y,k})^\top \in \R^2$.
We consider the NNCS~\eqref{eq:system} where $f_0(x,u) = x + dt\,u$ and $f_{\mathrm{NN}}(x_k) = dt\,\hat{\bm{V}}(x_k, y_k)$. The state and control constraint sets are chosen as  $\cX = [-\pi,\,\pi] \times [-\pi,\,\pi]$ and $\cU = [-0.5,\,0.5] \times [-0.5,\,0.5]$. We fix $N_u = 1$, i.e., no control slicing, and vary the resolution $\varepsilon = \frac{2\pi}{K}$ with 
$K \in \{1024,\, 2048,\, 4096\}$ in both Algorithms~\ref{alg:outer} and~\ref{alg:escape}.
Since the control is two-dimensional, each hyperplane~\eqref{eq:critical_eq} is a line in $\R^2$, and the regions of the arrangement are open convex polygons. Figure~\ref{fig:flow_paving} shows the pavings $\cC, \cO, \mathbb{U}$ as results of Algorithms~\ref{alg:outer} and~\ref{alg:escape} for varying $K$. As in the lane keeping example, the size of $\mathbb{U}$ decreases as $K$ increases, i.e., as the paving becomes finer with higher resolution. The wall-clock runtimes of Algorithm~\ref{alg:outer} for computing the CIS $\cC$ are $79.5$\,s, $245.1$\,s, and $2056.3$\,s for $K = 1024$, $2048$, and $4096$, respectively.

% Now consider a discrete-time flow navigation model:
% %based on $\hat{\bm{V}}$,
% \begin{equation}\label{eq:flow_dyn_num}
% \begin{aligned}
%   x_{k+1} &= x_k + dt\bigl(\hat V_x(x_k,y_k) + u_{x,k}\bigr),\\
%   y_{k+1} &= y_k + dt\bigl(\hat V_y(x_k,y_k) + u_{y,k}\bigr),
% \end{aligned}
% \end{equation}
% where $x_k$ and $y_k$ denote the agent position coordinates, and $u_{x,k}$ and $u_{y,k}$ are the control inputs.
% %The sampling period is $dt=0.02$\,s.
% \XX{Revise similar to Example 1.}
% The state is defined as $x_k=(x_k,y_k)^\top\in\R^2$ with control $u_k=(u_{x,k},u_{y,k})^\top\in\R^2$, and the learned velocity network $\hat{\bm{V}}=(\hat V_x,\hat V_y)^\top$ serves as $f_{\mathrm{NN}}$ in~\eqref{eq:system}, while $f_0(x,u)=x+dt\,u$.
% The state and control constraint sets are $\cX = [-\pi,\;\pi]\times [-\pi,\;\pi]$ and  
% $\cU = [-0.5,\;0.5]\times [-0.5,\;0.5].$

% We then fix $N_u=1$, i.e., no control slicing, and test three resolutions $\varepsilon = \frac{2\pi}{K}$ with $ K\in\{1024,\;2048,\;4096\}.$

% \XX{Revise similar to Example 1. Tie it to the algorithms. Explain the figures similar to Example 1.}
% Figure~\ref{fig:flow_paving} displays the resulting three-class pavings.
% The wall-clock runtimes for computing the CIS $\cC$ are $79.5$\,s, $245.1$\,s, and $2056.3$\,s for $K=1024$, $2048$, and $4096$, respectively.

%\end{example}

\section{Conclusion} \label{sec:concl}
In this paper, we proposed an optimization-free approach for computing inner and outer approximations of the maximal controlled invariant set for NNCSs. By leveraging the control-affine inclusion function of NNCS dynamics and hyperplane arrangements, the infinite control search for verifying CISs is reduced to a finite set of evaluations. 
Verification-guided algorithms are developed with inherent parallelizability. 
% Future work includes improving approximation tightness and quantifying the approximation error.
% This paper proposed an optimization-free method for computing certified inner and outer approximations of the MCIS for discrete-time systems with neural-network components.
% A control-affine enclosure and the induced finite partition of the control space reduce the infinite search over admissible controls to finitely many representatives, making the method naturally GPU-parallelizable.
Simulation results on lane keeping and frozen-turbulence navigation demonstrated the effectiveness of the proposed method in the computation of three-class pavings and certified CIS.
Future work includes the extension to robust controlled invariance under uncertainty and the quantification of approximation errors.

\bibliographystyle{IEEEtran}
% \printbibliography
\bibliography{ref}

\end{document}